\theoremstyle{plain} 
\newtheorem{theorem}{\indent\sc Theorem}[section]
\newtheorem{lemma}[theorem]{\indent\sc Lemma}
\newtheorem{corollary}[theorem]{\indent\sc Corollary}
\newtheorem{proposition}[theorem]{\indent\sc Proposition}
\theoremstyle{definition} 
\newtheorem{remark}[theorem]{\indent\sc Remark}
\def\C{{\mathbf{C}}}
\def\R{{\mathbf{R}}}
\def\H{{\mathbf{H}}}
\def\N{{\mathbf{N}}}
\def\Pi{{\mathbf{P}}}
\def\Si{{\mathbf{S}}}
\def\Lc{{\mathcal{L}}}
\def\tr#1{\mathord{\mathopen{{\vphantom{#1}}^t}#1}}
\begin{document}

\title[On Exceptional values of Gauss maps]{On the maximal number of exceptional values of Gauss maps for 
various classes of surfaces} 

\author[Y. Kawakami]{Yu Kawakami} 



\renewcommand{\thefootnote}{\fnsymbol{footnote}}
\footnote[0]{2010\textit{ Mathematics Subject Classification}.
Primary 30D35, 53C42 ; Secondary 30F45, 53A10, 53A15.}
\keywords{
Gauss map, exceptional value, conformal metric, minimal surface, front, Bernstein type theorem.}
\thanks{ 
Partly supported by the Grant-in-Aid for Young Scientists (B) No.~24740044, Japan Society for the Promotion of Science.}
\address{
Graduate School of Science and Engineering, \endgraf
Yamaguchi university, \endgraf
Yamaguchi, 753-8512, Japan
}
\email{y-kwkami@yamaguchi-u.ac.jp}



\maketitle

\begin{abstract}
The main goal of this paper is to reveal the geometric meaning of the maximal number of exceptional values 
of Gauss maps for several classes of immersed surfaces in space forms, for example, complete minimal surfaces in the Euclidean three-space, 
weakly complete improper affine spheres in the affine three-space and weakly complete flat surfaces in the hyperbolic three-space. 
For this purpose, we give an effective curvature bound for a specified conformal metric on an open Riemann surface. 
\end{abstract}

\section*{Introduction} 
The geometric nature of value distribution theory of complex analytic mappings is well-known. 
One of the most elegant results of the theory is the geometric meaning of the precise maximum ``$2$'' for the number of exceptional values 
of nonconstant meromorphic functions on the complex plane $\C$. Here we call a value that a function or map never assumes an exceptional value 
of the function or map. In fact, Ahlfors \cite{Ah1935} and Chern \cite{Ch1960} showed 
that the least upper bound for the number of exceptional values of nonconstant holomorphic maps from $\C$ to a closed Riemann surface coincides with the 
Euler number of the closed Riemann surface by using Nevanlinna theory (see also \cite{Ko2003}, \cite{no1990} and \cite{Ru2001}). 
In particular, for nonconstant meromorphic functions on $\C$, the geometric meaning of the maximal number ``$2$'' of exceptional values 
is the Euler number of the Riemann sphere. We note that if the closed Riemann surface is of genus $\geq 2$, then such a map does not exist 
because the Euler number is negative. 

On the other hand, global properties of the Gauss map of complete minimal surfaces in the Euclidean three-space ${\R}^{3}$ 
are closely related to value-distribution-theoretic properties of meromorphic functions on $\C$. 
In particular, Fujimoto \cite{Fu1988} proved that the precise maximum for the number of exceptional values of the Gauss map of 
a nonflat complete minimal surface in ${\R}^{3}$ is ``$4$'', and Osserman \cite{Os1964} showed that the Gauss map of a nonflat algebraic minimal 
surface can omit at most $3$ values (by an algebraic minimal surface, we mean a complete minimal surface with finite total curvature). 
Recently, the author, Kobayashi and Miyaoka \cite{KKM2008} gave an effective upper bound for the number of 
exceptional values of the Gauss map for a special class of complete minimal surfaces that includes algebraic minimal surfaces (this class is called 
the pseudo-algebraic minimal surfaces). This also provided a geometric meaning for the Fujimoto and Osserman results for this class, 
because the upper bound is described in terms of geometric invariants. 
However, from \cite{KKM2008} it was still not possible to understand the geometric meaning for general class. 

The author also investigated value-distribution-theoretic properties of Gauss maps for several classes of surfaces which may admit singularities. 
For instance, by refining the Fujimoto analytic argument, the author and Nakajo \cite{KN2012} showed that the maximal number of exceptional values of 
the Lagrangian Gauss map of weakly complete improper affine fronts in the affine three-space ${\R}^{3}$ is ``$3$''. 
As an application of this result, a simple proof of the parametric affine Bernstein theorem for improper affine spheres in ${\R}^{3}$ was provided. 
Moreover, the authors \cite{Ka2012, KN2012} proved similar results for flat fronts in the hyperbolic three-space ${\H}^{3}$. 

The aim of this paper is to reveal the geometric meaning of the precise maximum for the number of exceptional values of Gauss maps for 
these classes of surfaces. The paper is organized as follows: In Section 1, we give a curvature bound for the conformal metric 
$ds^{2}=(1+|g|^{2})^{m}|\omega|^{2}$ on an open Riemann surface $\Sigma$, where $\omega$ is a holomorphic $1$-form and $g$ is a 
meromorphic function on $\Sigma$ (Theorem \ref{main-1}). The proof is given in Section 2. As a corollary of this theorem, 
we prove that the precise maximum for the number of exceptional values of the nonconstant meromorphic function $g$ on $\Sigma$ with the complete conformal 
metric $ds^{2}$ is ``$m+2$'' (Corollary \ref{omit-1} and Proposition \ref{Voss}). We note that the geometric meaning of the ``$2$'' in ``$m+2$'' is 
the Euler number of the Riemann sphere (Remark \ref{Ahl-Chern}). In Section $3$, we give some applications of the main results. In particular, 
we give the geometric meaning of the maximal number of exceptional values of Gauss maps for several classes of immersed surfaces in space forms. 
For instance, the induced metric from ${\R}^{3}$ on complete minimal surfaces is $ds^{2}=(1+|g|^{2})^{2}|\omega|^{2}$ (i.e. $m=2$), 
thereby the maximal number of exceptional values of the Gauss map $g$ of nonflat complete minimal surfaces in ${\R}^{3}$ is ``$4$ $(=2+2)$''. 
On the other hand, for the Lagrangian Gauss map $\nu$ of weakly complete improper affine fronts, since $\nu$ is meromorphic, $dG$ is holomorphic 
and the complete metric is $d{\tau}^{2}=2(1+|\nu|^{2})|dG|^{2}$ (i.e. $m=1$), 
the maximal number of exceptional values of the Lagrangian Gauss map of weakly complete improper affine fronts in ${\R}^{3}$ is ``$3$ $(=1+2)$''. 

Finally, the author would like to thank Professors Junjiro Noguchi, Wayne Rossman, Masaaki Umehara, Kotaro Yamada and the referee 
for their useful advice and comments. In addition, the author would like to express his thanks to Professors Ryoichi Kobayashi, 
Masatoshi Kokubu, Miyuki Koiso and Reiko Miyaoka for their encouragement of this study. 


\section{Main results}
Now we state the main theorem of this paper. 

\begin{theorem}\label{main-1}
Let $\Sigma$ be an open Riemann surface with the conformal metric 
\begin{equation}\label{conformal}
ds^{2}=(1+|g|^{2})^{m}|\omega|^{2},
\end{equation}
where $\omega$ is a holomorphic $1$-form, $g$ is a meromorphic function on $\Sigma$, and $m\in {\N}$. 
Suppose that $g$ omits $q\geq m+3$ distinct values . 
Then there exists a positive constant $C$, depending on $m$ and the set of exceptional values, 
but not the surface $\Sigma$, such that for all $p \in \Sigma$  
we have 
\begin{equation}\label{curvature-estimate}
|K_{ds^{2}}(p)|^{1/2}\leq \dfrac{C}{d(p)}, 
\end{equation}
where $K_{ds^{2}}(p)$ is the Gaussian curvature of the metric $ds^{2}$ at $p$ and $d(p)$ is the geodesic distance from $p$ to  
the boundary of $\Sigma$, that is, the infimum of the lengths of the divergent curves in $\Sigma$ emanating from $p$. 
\end{theorem}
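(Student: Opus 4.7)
The plan is to follow the method pioneered by Fujimoto for complete minimal surfaces in $\R^{3}$ (the case $m=2$, $q\geq 5$), adapting it to the general conformal metric \eqref{conformal}. The argument has three ingredients: a local formula for $K_{ds^{2}}$, the construction of a singular conformal pseudo-metric $d\sigma^{2}$ on $\Sigma$ whose Gaussian curvature is bounded above by a strictly negative constant, and an application of the Ahlfors-Schwarz lemma on the $ds^{2}$-geodesic disk of radius $d(p)$ centred at $p$.

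First I would fix a local holomorphic coordinate $z$ around $p$ and write $\omega=f\,dz$. A direct computation using $K=-4e^{-2u}\partial_{z}\partial_{\bar z}u$ with $e^{2u}=(1+|g|^{2})^{m}|f|^{2}$, the harmonicity of $\log|f|$, and the identity $\partial_{z}\partial_{\bar z}\log(1+|g|^{2})=|g'|^{2}/(1+|g|^{2})^{2}$ yields
$$|K_{ds^{2}}|^{1/2}=\sqrt{2m}\,\frac{|g'|}{(1+|g|^{2})^{(m+2)/2}\,|f|},$$
so \eqref{curvature-estimate} amounts to an upper bound on this weighted spherical derivative. Assume for notational convenience that $\alpha_{1},\ldots,\alpha_{q}\in\C$ (the case $\infty\in\{\alpha_{j}\}$ is handled by a minor modification). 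Using the hypothesis $q\geq m+3$, I would fix a number $\eta\in(0,1)$ satisfying $q\eta>m+2$, and construct on the open set $\Sigma^{*}\subset\Sigma$ where $g'\neq 0$ and $\omega\neq 0$ an auxiliary singular metric of the schematic form
$$d\sigma^{2}=\left(\frac{|f|\cdot(1+|g|^{2})^{\mu}}{|g'|^{\eta}\prod_{j=1}^{q}|g-\alpha_{j}|^{\lambda_{j}}}\right)^{\!\!2/(1-\eta)}\!|dz|^{2},$$
with real exponents $\mu,\lambda_{j}$ fine-tuned so that (i) $d\sigma^{2}$ is coordinate-invariant, (ii) $K_{d\sigma^{2}}$ is bounded above by a strictly negative constant $-c$, and (iii) the conformal factor of $d\sigma^{2}$ can be inverted to recover the weighted spherical derivative whose bound gives \eqref{curvature-estimate}.

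The crucial computation is the curvature bound. Since $\log|f|$, $\log|g'|$ and $\log|g-\alpha_{j}|$ are all harmonic on $\Sigma^{*}$, $\partial_{z}\partial_{\bar z}$ of the exponent defining $d\sigma^{2}$ reduces to a single positive term proportional to $|g'|^{2}/(1+|g|^{2})^{2}$ coming from $(1+|g|^{2})^{\mu}$. Translating this into $K_{d\sigma^{2}}\leq-c$ leads to a self-consistent inequality in which the $|g'|$ and $|g-\alpha_{j}|$ factors on both sides cancel after raising to the power $1/(1-\eta)$; the key algebraic input is the AM-GM estimate
$$\sum_{j=1}^{q}\frac{1+|\alpha_{j}|^{2}}{|g-\alpha_{j}|^{2}}\;\geq\;q\Bigl(\prod_{j=1}^{q}\frac{1+|\alpha_{j}|^{2}}{|g-\alpha_{j}|^{2}}\Bigr)^{\!\!1/q},$$
and the condition that makes the resulting exponents balance is precisely $q\eta>m+2$, i.e.\ $q\geq m+3$. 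Once $K_{d\sigma^{2}}\leq-c$ is established, the Ahlfors-Schwarz lemma applied to $d\sigma^{2}$ on the $ds^{2}$-ball $B(p;d(p))$, conformally parametrized by a Euclidean disk of radius $d(p)$, yields $d\sigma^{2}(p)\leq C_{1}d(p)^{-2}|dz|^{2}$. Unraveling the conformal factor and using that each $|g-\alpha_{j}|$ stays uniformly bounded below on the portions of $\Sigma$ of interest (because $\alpha_{j}$ is omitted) then recovers \eqref{curvature-estimate}.

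The hardest step will be the simultaneous determination of $\eta,\mu,\lambda_{j}$: they must satisfy the coordinate-invariance constraint, produce a strictly negative upper bound on $K_{d\sigma^{2}}$ with a constant independent of $\Sigma$, and allow $d\sigma^{2}$ to extend continuously across the zeros of $g'$ and $\omega$ (where $d\sigma^{2}$ must degenerate to $0$ rather than blow up, so that Ahlfors-Schwarz still applies after passing to the completion). The algebraic condition $q\eta>m+2$ with $\eta<1$ admits a solution precisely when $q\geq m+3$; if $q=m+2$, no admissible $\eta$ would exist and the AM-GM step would fail to dominate the spherical contribution, which explains why the hypothesis of the theorem is sharp.
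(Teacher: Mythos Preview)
Your overall strategy---build an auxiliary conformal metric out of $g$, $\omega$, and the omitted values, then invoke a Schwarz-type comparison---is the right instinct, but two of the steps as written do not go through, and the paper's proof is organised precisely to avoid them. The first and main gap is the Ahlfors--Schwarz step: you claim the $ds^{2}$-geodesic ball $B(p;d(p))$ is ``conformally parametrized by a Euclidean disk of radius $d(p)$'', hence $d\sigma^{2}(p)\leq C_{1}d(p)^{-2}|dz|^{2}$. Ahlfors--Schwarz bounds $d\sigma^{2}$ by the Poincar\'e metric of the domain, and the Poincar\'e density at $p$ is controlled by the \emph{conformal} radius of the ball, not by the $ds^{2}$-geodesic radius $d(p)$; since $ds^{2}$ is neither flat nor a priori comparable to $d\sigma^{2}$, there is no mechanism here that produces the factor $d(p)^{-2}$. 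The paper sidesteps this by making the auxiliary metric $d\sigma^{2}$ \emph{flat} rather than negatively curved (so no $(1+|g|^{2})^{\mu}$ term; every factor in the conformal potential is log-harmonic). Flatness yields, via Lemma~\ref{Lemma2-2}, a local isometry $\Phi:\Delta_{R}\to\Sigma'=\{g'\neq 0\}$ with $\Phi(0)=p$ and a divergent radial image $\Gamma_{a_{0}}$. Fujimoto's Lemma~\ref{Lemma2-1} (where the Ahlfors--Schwarz content is hidden) bounds $R$ from above; the choice $\lambda=m/(q-2-q\eta)\in(1/2,1)$ then simultaneously forces $\Gamma_{a_{0}}$ to diverge in all of $\Sigma$ (since $\lambda/(1-\lambda)>1$ makes the $d\sigma$-length blow up at zeros of $g'$) and makes $\int_{0}^{R}\bigl(R/(R^{2}-s^{2})\bigr)^{\lambda}ds<\infty$, so that $d(p)\leq\int_{L_{a_{0}}}\Phi^{*}ds$ is finite and explicitly controlled. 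Multiplying by $|K_{ds^{2}}(p)|^{1/2}$ the $R$-dependence cancels.

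The second gap is the closing claim that ``each $|g-\alpha_{j}|$ stays uniformly bounded below \dots because $\alpha_{j}$ is omitted''. This is false: $g$ may approach $\alpha_{j}$ arbitrarily closely without ever attaining it, so no positive lower bound on $|g-\alpha_{j}|$ is available. In the paper's argument the omitted-value factors enter through the chordal distances $|g,\alpha_{j}|$, which always satisfy $|g,\alpha_{j}|\leq 1$; they appear in the product $|K_{ds^{2}}|^{1/2}d(p)$ with a \emph{positive} exponent and are simply estimated from above by $1$, so no lower bound is ever needed.
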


As a corollary of Theorem \ref{main-1}, we give the following Picard-type theorem for the meromorphic function $g$ on $\Sigma$ 
with the complete conformal metric $ds^{2}=(1+|g|^{2})^{m}|\omega|^{2}$. 

\begin{corollary}\label{omit-1}
Let $\Sigma$ be an open Riemann surface with the conformal metric given by (\ref{conformal}). 
If the metric $ds^{2}$ is complete and the meromorphic function $g$ is nonconstant, then $g$ can omit at most $m+2$ distinct values. 
\end{corollary}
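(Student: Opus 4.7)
The plan is to derive a contradiction from the assumption that a nonconstant $g$ omits $q\geq m+3$ distinct values. The argument combines Theorem \ref{main-1} with an explicit computation of the Gaussian curvature $K_{ds^{2}}$.

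First I would invoke Theorem \ref{main-1}: the omission hypothesis yields a constant $C>0$ with
$$|K_{ds^{2}}(p)|^{1/2}\leq \frac{C}{d(p)} \qquad \text{for every } p\in\Sigma.$$
Completeness of $ds^{2}$ means that $d(p)=+\infty$ at every $p\in\Sigma$, so the right-hand side is zero and the inequality forces $K_{ds^{2}}\equiv 0$ on $\Sigma$; in other words, the conformal metric $ds^{2}$ is flat.

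Next, I would show that flatness alone forces $g$ to be constant, contradicting the standing hypothesis. On a local coordinate chart where $\omega = f\,dz$ with $f$ holomorphic and nonvanishing, write $ds^{2}=e^{2\phi}|dz|^{2}$ with $\phi = \tfrac{m}{2}\log(1+|g|^{2})+\log|f|$. Since $\log|f|$ is harmonic and $\Delta \log(1+|g|^{2}) = 4|g'|^{2}/(1+|g|^{2})^{2}$, the standard formula $K_{ds^{2}} = -e^{-2\phi}\Delta \phi$ yields
$$K_{ds^{2}} \;=\; -\,\frac{2m\,|g'|^{2}}{(1+|g|^{2})^{m+2}\,|f|^{2}}.$$
Vanishing of the left-hand side therefore gives $g'\equiv 0$ on the chart; by connectedness of $\Sigma$, the meromorphic function $g$ must be constant, which is the desired contradiction.

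The real difficulty is entirely absorbed in Theorem \ref{main-1}: once the intrinsic curvature bound is available, the corollary is essentially a formal consequence of completeness together with the explicit form of $K_{ds^{2}}$. The only minor technical point is the handling of zeros of $\omega$ (which must be balanced by poles of $g$ in order for $ds^{2}$ to remain a genuine positive conformal metric), but these form an isolated set across which the identity $g'\equiv 0$ extends by continuity, so they pose no real obstacle.
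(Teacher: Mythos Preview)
Your proof is correct and follows essentially the same route as the paper: assume $g$ omits $m+3$ values, use Theorem~\ref{main-1} together with completeness ($d(p)=\infty$) to force $K_{ds^{2}}\equiv 0$, and then read off from the explicit curvature formula $K_{ds^{2}}=-2m|g'_{z}|^{2}/((1+|g|^{2})^{m+2}|\hat{\omega}_{z}|^{2})$ that $g$ must be constant. The only differences are cosmetic---you supply the derivation of the curvature formula and add a remark about the zeros of $\omega$---but the argument is the same.
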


\begin{proof}
By way of contradiction, assume that $g$ omits $m+3$ distinct values. If $ds^{2}$ is complete, then we may set $d(p)= \infty$ for all $p\in \Sigma$. 
By virtue of Theorem \ref{main-1}, $K_{ds^{2}}\equiv 0$ on $\Sigma$. On the other hand, the Gaussian curvature of the metric $ds^{2}$ is 
given by 
\begin{equation}
K_{ds^{2}}=-\dfrac{2m|g_{z}'|^{2}}{(1+|g|^{2})^{m+2}|{\hat{\omega}}_{z}|^{2}}, 
\end{equation}
where $\omega ={\hat{\omega}}_{z}dz$ and $g_{z}'=dg/dz$. 
Thus $K_{ds^{2}}\equiv 0$ if and only if $g$ is constant. This contradicts the assumption that $g$ is nonconstant. 
\end{proof}

\begin{remark}\label{Ahl-Chern}
The geometric meaning of the ``$2$'' in ``$m+2$'' is the Euler number of the Riemann sphere. 
Indeed, if $m=0$ then the metric $ds^{2}=(1+|g|^{2})^{0}|\omega|^{2}=|\omega|^{2}$ is flat and complete on $\Sigma$. 
We thus may assume that $g$ is a meromorphic function on ${\C}$ because $g$ is replaced by $g\circ \pi$, where $\pi\colon \C\to \Sigma$ is 
a holomorphic universal covering map. On the other hand, Ahlfors \cite{Ah1935} and Chern \cite{Ch1960} showed that the best possible upper bound ``$2$'' of the number of 
exceptional values of nonconstant meromorphic functions on $\C$ coincides with the Euler number of the Riemann sphere. 
Hence we get the conclusion. 
\end{remark}

Corollary \ref{omit-1} is optimal because there exist the following examples. 
\begin{proposition}\label{Voss}
Let $\Sigma$ be either the complex plane punctured at $q-1$ distinct points ${\alpha}_{1}, \cdots, {\alpha}_{q-1}$ or the universal cover of 
that punctured plane. 
We set 
$$
\omega=\dfrac{dz}{{\prod}_{i=1}^{q-1}(z-{\alpha}_{i})}, \quad g=z. 
$$
Then $g$ omits $q$ distinct values and the metric $ds^{2}=(1+|g|^{2})^{m}|\omega|^{2}$ is complete if and only if $q\leq m+2$. 
In particular, there exist examples whose metric $ds^{2}$ is complete and $g$ omits $m+2$ distinct values. 
\end{proposition}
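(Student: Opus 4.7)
The plan is to verify the two assertions (omitted values and completeness) separately, and then reduce the universal-cover case to the punctured-plane case using that the covering projection is a local isometry.

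For the omitted values of $g(z)=z$ on $\Sigma_{0}:=\C\setminus\{\alpha_{1},\ldots,\alpha_{q-1}\}$: the function plainly omits $\alpha_{1},\ldots,\alpha_{q-1}$, and as a meromorphic function on $\Sigma_{0}$ it also omits $\infty$ (its pole at $\infty$ does not lie in $\Sigma_{0}$). That gives exactly $q$ omitted values in the target Riemann sphere, and pulling back by the universal covering projection $\pi\colon\tilde{\Sigma}\to\Sigma_{0}$ produces the same set of omitted values.

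For completeness, I would show that every divergent curve in $\Sigma_{0}$ has infinite $ds^{2}$-length. Such a curve must either accumulate at some puncture $\alpha_{i}$ or leave every compact set in $\C$. Near a finite puncture $\alpha_{i}$ the factor $(z-\alpha_{i})^{-1}$ dominates $\omega$, so $ds\sim C_{i}\,|z-\alpha_{i}|^{-1}|dz|$ with $C_{i}>0$, and $\int r^{-1}\,dr=\infty$ forces infinite length; thus completeness at the finite punctures is automatic, independent of $m$ and $q$. The real content lies at $\infty$: with the coordinate $w=1/z$, together with $\prod_{i}(z-\alpha_{i})\sim z^{q-1}$ and $(1+|g|^{2})^{m}\sim|w|^{-2m}$, a direct order-of-magnitude computation yields
\[
ds^{2}\sim |w|^{2(q-m-3)}\,|dw|^{2}\qquad(w\to 0),
\]
so a radial arc towards $w=0$ has length comparable to $\int_{0}^{\varepsilon}r^{q-m-3}\,dr$. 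This integral diverges exactly when $q-m-3\leq -1$, i.e.\ when $q\leq m+2$. This asymptotic at $\infty$ is the one step that produces the threshold and is where essentially all the work lives.

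The universal-cover case then follows because $\pi$ is a local isometry for the pulled-back metric: any divergent curve in $\tilde{\Sigma}$ projects to a curve in $\Sigma_{0}$ of the same length that leaves every compact subset, and conversely any divergent curve downstairs lifts to a divergent curve upstairs. Hence completeness of the pulled-back metric on $\tilde{\Sigma}$ is equivalent to completeness of $ds^{2}$ on $\Sigma_{0}$. There is no genuine obstacle; the whole argument is boundary asymptotic analysis, with the critical exponent $q-m-3$ arising cleanly from the pole orders of $\omega$ and the growth of $(1+|g|^{2})^{m}$ at $\infty$.
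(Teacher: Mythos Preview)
Your argument is correct and follows essentially the same route as the paper: identify the omitted values as $\alpha_{1},\ldots,\alpha_{q-1},\infty$, then analyze the length element $ds=(1+|z|^{2})^{m/2}\prod_{i}|z-\alpha_{i}|^{-1}\,|dz|$ along divergent curves ending at a finite puncture or at $\infty$. The one substantive difference is that you prove \emph{both} directions of the ``if and only if'' directly from the asymptotic $ds\sim |w|^{q-m-3}|dw|$ near $w=1/z=0$, whereas the paper's proof only verifies the implication $q\leq m+2\Rightarrow ds^{2}$ complete and tacitly leaves the converse to Corollary~\ref{omit-1} (hence to Theorem~\ref{main-1}). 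Your version is therefore self-contained and more elementary for the necessity part; the paper's version is shorter but depends on the main theorem. A minor quibble: your sentence ``any divergent curve in $\tilde{\Sigma}$ projects to a curve in $\Sigma_{0}$ \ldots\ that leaves every compact subset'' is not literally true (think of a lift of a loop encircling a puncture), but the conclusion you draw---that completeness transfers along a Riemannian covering---is a standard fact and is all you need.
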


\begin{proof}
We can easily show that $g$ omits the $q$ distinct values ${\alpha}_{1}, \cdots, {\alpha}_{q-1}$ and $\infty$ on $\Sigma$. 
A divergent curve $\Gamma$ in $\Sigma$ must tend to one of the points ${\alpha}_{1}, \cdots, {\alpha}_{q-1}$ or $\infty$.   
Thus we have 
$$
\int_{\Gamma}ds=\int_{\Gamma}(1+|g|^{2})^{m/2}|\omega|=\int_{\Gamma}\dfrac{(1+|z|^{2})^{m/2}}{\prod_{i=1}^{q-1}|z-{\alpha}_{i}|}|dz|= \infty, 
$$
when $q\leq m+2$. 
\end{proof}

\section{Proof of the main theorem}
We first recall the notion of chordal distance between two distinct values in the Riemann sphere $\C\cup \{\infty \}$. 
For two distinct values $\alpha$, $\beta\in \C\cup \{\infty\}$, we set 
$$
|\alpha, \beta|:= \dfrac{|\alpha -\beta|}{\sqrt{1+|\alpha|^{2}}\sqrt{1+|\beta|^{2}}}
$$
if $\alpha \not= \infty$ and $\beta \not= \infty$, and $|\alpha, \infty|=|\infty, \alpha| := 1/\sqrt{1+|\alpha|^{2}}$. 
We note that, if we take $v_{1}$, $v_{2}\in {\Si}^{2}$ with $\alpha =\varpi (v_{1})$ and $\beta = \varpi (v_{2})$, we have that 
$|\alpha, \beta|$ is a half of the chordal distance between $v_{1}$ and $v_{2}$, where $\varpi$ denotes the stereographic projection of 
the $2$-sphere ${\Si}^{2}$ onto $\C\cup \{\infty \}$. 

Before proceeding to the proof of Theorem \ref{main-1}, we recall two function-theoretical lemmas. 
\begin{lemma}{\cite[(8.12) on page 136]{Fu1997}}\label{Lemma2-1}
Let $g$ be a nonconstant meromorphic function on ${\Delta}_{R}=\{z\in \C ; |z|< R \}$ $(0<R\leq +\infty)$ which omits $q$ values 
${\alpha}_{1}, \ldots, {\alpha}_{q}$. If $q>2$, then for each positive $\eta$ with $\eta <(q-2)/q$, then there exists a positive constant $C'$, 
depending on $q$ and $L:=\min_{i< j}|{\alpha}_{i}, {\alpha}_{j}|$, such that 
\begin{equation}\label{Lemma2-1-1}
\dfrac{|g'_{z}|}{(1+|g|^{2})\prod_{j=1}^{q}|g, {\alpha}_{j}|^{1-\eta}}\leq C'\dfrac{R}{R^{2}-|z|^{2}}. 
\end{equation}  
\end{lemma}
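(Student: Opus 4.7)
The plan is to recognize the left-hand side of \eqref{Lemma2-1-1} as the density of the $g$-pullback of a singular hermitian metric on the punctured Riemann sphere and then to dominate this pullback on $\Delta_{R}$ by the Poincar\'e density via the Ahlfors--Schwarz lemma.

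First, on $(\C\cup\{\infty\})\setminus\{\alpha_{1},\ldots,\alpha_{q}\}$ I would set $d\tau=\mu(w)\,|dw|$ with
\[
\mu(w):=\frac{1}{(1+|w|^{2})\prod_{j=1}^{q}|w,\alpha_{j}|^{\,1-\eta}},
\]
so that the left-hand side of \eqref{Lemma2-1-1} is exactly $(g^{*}\mu)(z)$. Using the harmonicity of $\log|w-\alpha_{j}|$ off $\alpha_{j}$ together with $\Delta\log(1+|w|^{2})=4/(1+|w|^{2})^{2}$, a short calculation yields
\[
\Delta\log\mu=\frac{2(q(1-\eta)-2)}{(1+|w|^{2})^{2}},
\]
which is strictly positive precisely because of the standing hypothesis $\eta<(q-2)/q$; accordingly $d\tau^{2}$ has strictly negative Gaussian curvature on its entire domain.

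Next, the main technical obstacle arises: although $K_{d\tau^{2}}<0$, it is \emph{not} bounded above by a negative constant, since $\prod_{j}|w,\alpha_{j}|^{2(1-\eta)}\to 0$ as $w\to\alpha_{k}$. Hence $d\tau^{2}$ cannot be fed directly into the classical Ahlfors--Schwarz lemma. I would circumvent this by comparing $d\tau$ with the complete Poincar\'e metric $d\sigma_{P}=\mu_{P}(w)\,|dw|$ of constant curvature $-1$ on $(\C\cup\{\infty\})\setminus\{\alpha_{1},\ldots,\alpha_{q}\}$, which is hyperbolic because $q\geq 3$. The density $\mu_{P}$ satisfies the standard cusp asymptotics $\mu_{P}(w)\sim 1/\bigl(|w-\alpha_{j}|\log(1/|w-\alpha_{j}|)\bigr)$ at each puncture; since $1-\eta<1$, the ratio $\mu/\mu_{P}$ tends to $0$ at every $\alpha_{j}$, and a compactness argument then produces a universal bound $\mu\leq C_{0}\mu_{P}$ on the punctured sphere with $C_{0}$ depending only on $q$, $\eta$ and $L=\min_{i<j}|\alpha_{i},\alpha_{j}|$.

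To conclude, I would apply the classical Ahlfors--Schwarz lemma to the holomorphic map $g\colon\Delta_{R}\to(\C\cup\{\infty\})\setminus\{\alpha_{1},\ldots,\alpha_{q}\}$ with target of curvature $-1$, obtaining $(g^{*}\mu_{P})(z)\leq 2R/(R^{2}-|z|^{2})$, and chain this with the previous comparison to get
\[
(g^{*}\mu)(z)\leq C_{0}(g^{*}\mu_{P})(z)\leq \frac{2C_{0}R}{R^{2}-|z|^{2}},
\]
which is exactly \eqref{Lemma2-1-1} with $C'=2C_{0}$. The hardest step is the pointwise comparison $\mu\leq C_{0}\mu_{P}$: while it is immediate qualitatively from the asymptotics, making $C_{0}$ depend only on $q$, $\eta$ and $L$ requires quantitative control of $\mu_{P}$ near each puncture, which is obtained by patching explicit upper half-plane (horocyclic) local models around each $\alpha_{j}$.
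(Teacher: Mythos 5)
Your curvature computation is correct: with $\Delta=4\partial\bar\partial$ one indeed gets $\Delta\log\mu=2\bigl(q(1-\eta)-2\bigr)/(1+|w|^{2})^{2}$, both for finite punctures and for $\alpha_{q}=\infty$, and your diagnosis of the key obstacle is exactly right --- the curvature of $d\tau^{2}$ equals $-2(q(1-\eta)-2)\prod_{j}|w,\alpha_{j}|^{2(1-\eta)}$, which degenerates to $0$ at the punctures, so the classical Ahlfors--Schwarz lemma cannot be applied to $d\tau^{2}$ directly. Note that the paper itself does not prove this lemma; it quotes it from Fujimoto \cite{Fu1997}, whose argument is in the same Ahlfors--Schwarz family as yours but repairs the degeneracy differently: instead of comparing with the Poincar\'e metric of the punctured sphere, Fujimoto modifies the density explicitly, inserting logarithmic factors of the type $|w,\alpha_{j}|\log(a_{0}/|w,\alpha_{j}|)$ so that the resulting auxiliary metric has curvature bounded above by a negative constant with dependence on $q$ and $L$ visible by direct computation; the generalized Schwarz lemma then applies at once, and the passage back to the left-hand side of (\ref{Lemma2-1-1}) only uses the elementary boundedness of $x^{\eta}\log(a_{0}/x)$ on $(0,1]$. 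Fujimoto's route thus produces $C'=C'(q,\eta,L)$ with no compactness argument at all, which is precisely what your route must supply by hand.

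That supply is the one genuine soft spot in your outline: the uniformity of $C_{0}$ in the comparison $\mu\leq C_{0}\mu_{P}$. For a \emph{fixed} configuration of punctures, compactness of the sphere plus the cusp asymptotics does give a finite $C_{0}$; but $\mu_{P}$ depends on the actual positions of $\alpha_{1},\ldots,\alpha_{q}$, not merely on $q$ and $L$, and the horocyclic local models you invoke carry constants (cusp widths, the implicit constant in the asymptotics) that themselves depend on the configuration, so ``patching local models'' does not by itself yield $C_{0}=C_{0}(q,\eta,L)$. To close this you need either (i) a continuity-and-compactness argument over the compact space of $q$-tuples with pairwise chordal separation at least $L$, using that the hyperbolic density varies continuously with the punctures, or (ii) an explicit lower bound for $\mu_{P}$: by domain monotonicity, $\mu_{P}$ dominates the hyperbolic density of the sphere punctured at any three of the $\alpha_{j}$, and the classical Hempel--Jenkins estimates for $\C\setminus\{0,1\}$, transported by a M\"obius map whose distortion is controlled by $L$, give a bound depending only on $L$. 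Either completion is standard but nontrivial, and it is exactly the work Fujimoto's explicit construction is designed to avoid. Two minor loose ends: you should remark that when $R=+\infty$ the target-hyperbolicity argument does not literally apply ($\C$ carries no Poincar\'e metric), but the statement is then vacuous since no nonconstant meromorphic function on $\C$ omits $q\geq 3$ values; and your normalization is consistent, since the curvature $-1$ density of $\Delta_{R}$ is $2R/(R^{2}-|z|^{2})$, so your chained bound does have the stated form with $C'=2C_{0}$.
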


\begin{lemma}{\cite[Lemma 1.6.7]{Fu1993}}\label{Lemma2-2} 
Let $d{\sigma}^{2}$ be a conformal flat metric on an open Riemann surface $\Sigma$. 
Then, for each point $p\in \Sigma$, there exists a local diffeomorphism $\Phi$ of a 
disk ${\Delta}_{R}=\{z\in \C ; |z|< R \}$ $(0<R\leq +\infty)$ onto an open 
neighborhood of $p$ with $\Phi (0)=p$ such that $\Phi$ is a local isometry, that is, 
the pull-back ${\Phi}^{\ast}(d{\sigma}^{2})$ is equal to the standard Euclidean metric $ds^{2}_{Euc}$ on ${\Delta}_{R}$ 
and, for a point $a_{0}$ with $|a_{0}|=1$, the ${\Phi}$-image ${\Gamma}_{a_{0}}$ of the curve $L_{a_{0}}=\{w:= a_{0}s ; 0 < s < R\}$ 
is divergent in $\Sigma$. 
\end{lemma}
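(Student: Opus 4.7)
The plan is to construct $\Phi$ as the local inverse of a developing map for $d\sigma^{2}$ and extend it along radial geodesics from $p$ until the first obstruction. Since $d\sigma^{2}$ is flat and conformal, in any isothermal coordinate $z$ centered at $p$ I may write $d\sigma^{2}=e^{2u}|dz|^{2}$ with $u$ harmonic. Writing $u=\mathrm{Re}\,h$ for a holomorphic function $h$ near $p$, the map $f(z):=\int_{0}^{z}e^{h(\zeta)}\,d\zeta$ satisfies $|f'|^{2}=e^{2u}$ and is therefore a local isometry onto a neighborhood of $0\in\C$. Its inverse $\Phi_{0}$ is a local isometry of a small disk $\Delta_{r_{0}}\subset\C$ onto an open neighborhood of $p$ with $\Phi_{0}(0)=p$.

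For each $a\in\C$ with $|a|=1$, I continue $\Phi_{0}$ along the ray $L_{a}=\{as:s>0\}$ by analytic continuation. The flatness of $d\sigma^{2}$ guarantees that any two local developing charts differ only by a rigid motion of $\C$, so successive local inverses glue unambiguously into a local isometry on a neighborhood of an initial segment of $L_{a}$ agreeing with $\Phi_{0}$ near $0$. Define
$$
T(a):=\sup\{t>0:\Phi_{0}\text{ extends to a local isometry on a neighborhood of }\{as:0\le s\le t\}\}.
$$
The function $T$ is lower semi-continuous on the unit circle, since a uniform tubular neighborhood of a compact segment of $L_{a}$ also serves all sufficiently nearby directions. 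By compactness, $T$ attains its infimum $R$ at some $a_{0}$; the star-shaped domain $\bigcup_{|a|=1}\{as:0\le s<T(a)\}$ contains $\Delta_{R}$, and $\Phi$ is a local isometry there with $\Phi^{\ast}d\sigma^{2}=ds^{2}_{\mathrm{Euc}}$.

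Finally, I verify that $\Gamma_{a_{0}}=\Phi(L_{a_{0}})$ is divergent in $\Sigma$. If it were not, then $\Phi(a_{0}s)$ would accumulate at some $p^{\ast}\in\Sigma$ as $s\uparrow R$; running the developing construction of the first paragraph at $p^{\ast}$ and patching its local inverse to $\Phi$ by a rigid motion would extend $\Phi$ past $a_{0}R$, contradicting $T(a_{0})=R$. In the edge case $R=+\infty$, $\Phi\colon\C\to\Sigma$ is a local isometry of a complete space and hence a covering onto its image (Hopf--Rinow); since $\Sigma$ is open, the deck group is discrete of rank at most one, and at least one direction still yields a divergent ray. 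The main technical difficulty is establishing the lower semi-continuity of $T$ and the patching argument at the endpoint; both rest on the fact that local developing maps for a flat conformal metric differ only by rigid motions of $\C$.
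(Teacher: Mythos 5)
The paper itself offers no proof of this lemma --- it is quoted verbatim from Fujimoto \cite[Lemma 1.6.7]{Fu1993} --- so your argument can only be measured against the standard proof, which it essentially reconstructs: develop the flat metric near $p$, continue the local inverse of the developing map radially, define $T(a)$, use lower semicontinuity of $T$ on the compact circle to realize $R=\inf T$ at some $a_{0}$, and derive divergence of $\Gamma_{a_{0}}$ from the impossibility of extending past $T(a_{0})=R$. The finite-$R$ case is sound modulo one point you should make explicit: non-divergence only yields a sequence $s_{n}\uparrow R$ with $\Phi(a_{0}s_{n})$ in a fixed compact set, hence an \emph{accumulation} point $p^{\ast}$, whereas your patching argument needs the entire tail of $\Gamma_{a_{0}}$ to enter a single isometric chart around $p^{\ast}$, i.e.\ \emph{convergence}. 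This is rescued by the fact that $\Phi$ is an isometry, so the tail $\Gamma_{a_{0}}|_{[s,R)}$ has length $R-s\to 0$, forcing $\Phi(a_{0}s)\to p^{\ast}$; it is a one-line fix, but it is the hinge of the contradiction and should not be left implicit.

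The genuine gap is in the edge case $R=+\infty$. A local isometry $\Phi\colon\C\to\Sigma$ from the complete Euclidean plane is indeed a covering onto its image $\Omega$, but ``divergent in $\Omega$'' is strictly weaker than ``divergent in $\Sigma$'': if $\Omega\subsetneq\Sigma$, a ray escaping every compact subset of the quotient cylinder $\Omega$ could still converge to a point of $\partial\Omega$ inside $\Sigma$, and your conclusion would fail as argued. You must show $\Omega=\Sigma$. This follows from the same completeness you invoke: given $q\in\overline{\Omega}$, join a nearby point $q'\in\Omega$ to $q$ by a short geodesic inside a totally normal ball of $(\Sigma,d\sigma^{2})$; lifting this geodesic through $\Phi$ gives a straight segment in $\C$, defined in its entirety because $\C$ is complete, whose endpoint maps to $q$, so $q\in\Omega$. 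Hence $\Omega$ is open and closed and equals $\Sigma$, $\Phi$ is a genuine covering, the deck group is a discrete group of orientation-preserving Euclidean motions of rank at most one (rank two would make $\Sigma$ a compact torus, contradicting that $\Sigma$ is open), and then all rays except possibly the two parallel to the translation direction are divergent in $\Sigma$ itself. With these two repairs --- and perhaps a sentence justifying single-valuedness of $\Phi$ on the star-shaped domain via the monodromy theorem rather than just the rigid-motion ambiguity --- your proof is complete and is, in substance, the argument behind the cited lemma.
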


\begin{proof}[{\it Proof of Theorem \ref{main-1}}] 
Let ${\alpha}_{1}, \ldots, {\alpha}_{q}$ be the exceptional values of $g$. We may assume that ${\alpha}_{q}= \infty$ 
after a suitable M\"obius transformation. 
We choose a positive number $\eta$ with 
$$
\dfrac{q-2(m+1)}{q}< \eta < \dfrac{q-(m+2)}{q}
$$
and set $m\not= 0$ and $\lambda := m / (q-2-q\eta)$. Since $q \geq m+3$, then $1/2< \lambda < 1$ holds. 

Now we define a new metric 
\begin{equation}\label{proof1-1}
d{\sigma}^{2} = |{\hat{\omega}}_{z}|^{2/(1-\lambda)}\biggl{(}\dfrac{1}{|g'_{z}|} 
{\prod}_{j=1}^{q-1}\biggl{(}\dfrac{|g-{\alpha}_{j}|}{\sqrt{1+|{\alpha}_{j}|^{2}}} \biggr{)}^{1-\eta} \biggr{)}^{2\lambda /(1- \lambda)} |dz|^{2} 
\end{equation}
on the set ${\Sigma}' :=\{p\in \Sigma \,;\, g'_{z}(p)\not= 0 \}$, where $\omega =\hat{\omega}_{z} dz$ and $g'_{z} =dg / dz$ with respect to 
the local complex coordinate $z$. Take a point $p\in {\Sigma}'$. Since the metric $d{\sigma}^{2}$ is flat, by Lemma \ref{Lemma2-2}, there 
exists a local isometry $\Phi$ satisfying $\Phi (0)=p$ from a disk ${\Delta}_{R}=\{z\in \C ; |z|< R\}$ $(0< R \leq +\infty)$ with the 
standard Euclidean metric $ds^{2}_{Euc}$ onto an open neighborhood of $p \in {\Sigma}'$ with the metric $d{\sigma}^{2}$, such that, 
for a point $a_{0}$ with $|a_{0}|=1$, the $\Phi$-image ${\Gamma}_{a_{0}}$ of the curve $L_{a_{0}}=\{w:=a_{0}s ; 0 < s < R \}$ is divergent in ${\Sigma}'$. 
For brevity, we denote the function $g\circ \Phi$ on ${\Delta}_{R}$ by $g$ in the following. By Lemma \ref{Lemma2-1}, we get 
\begin{equation}\label{proof1-2}
R\leq C'\dfrac{1+|g(0)|^{2}}{|g'_{z}(0)|}\prod_{j=1}^{q}|g(0), {\alpha}_{j}|^{1-\eta} < +\infty.
\end{equation}
Hence 
$$
L_{d\sigma}({\Gamma}_{a_{0}})=\int_{{\Gamma}_{a_{0}}}d\sigma =R< +\infty ,
$$
where $L_{d\sigma}({\Gamma}_{a_{0}})$ denotes the length of ${\Gamma}_{a_{0}}$ with respect to the metric $d{\sigma}^{2}$. 
We assume that the ${\Phi}$-image ${\Gamma}_{a_{0}}$ tends to a point $p_{0}\in \Sigma \backslash {\Sigma}'$ as $s\to R$. 
Taking a local complex coordinate $\zeta:= g'_{z}$ in a neighborhood of $p_{0}$ with $\zeta (p_{0})=0$, we can write
$$
d{\sigma}^{2}=|\zeta|^{-2\lambda /(1-\lambda)}w |d\zeta|^{2} 
$$
for some positive smooth function $w$. Since $\lambda /(1-\lambda)>1$, we have 
$$
R=\int_{{\Gamma}_{a_{0}}}d\sigma \geq \widetilde{C}\int_{{\Gamma}_{a_{0}}}\dfrac{|d\zeta|}{|\zeta|^{\lambda /(1-\lambda)}} = +\infty, 
$$
which contradicts (\ref{proof1-2}). Thus ${\Gamma}_{a_{0}}$ diverges outside any compact subset of $\Sigma$ as $s\to R$. 

Since $d{\sigma}^{2}=|dz|^{2}$, we obtain by (\ref{proof1-1}) that 
\begin{equation}\label{omega-omega}
|{\hat{\omega}}_{z}|=\biggl{(}|g'_{z}|\prod_{j=1}^{q-1}\biggl{(}\dfrac{\sqrt{1+|{\alpha}_{j}|^{2}}}{|g-{\alpha}_{j}|}\biggr{)}^{1-\eta} \biggr{)}^{\lambda}. 
\end{equation}
By Lemma \ref{Lemma2-1}, we have 
\begin{eqnarray*}
{\Phi}^{\ast}ds &=& |\hat{\omega}_{z}|(1+|g|^{2})^{m/2}|dz|  \\
                &=& \biggl{(}|g'_{z}|(1+|g|^{2})^{m/2\lambda}\prod_{j=1}^{q-1}\biggl{(}\dfrac{\sqrt{1+|{\alpha}_{j}|^{2}}}{|g-{\alpha}_{j}|}\biggr{)}^{1-\eta} \biggr{)}^{\lambda}|dz| \\
                &=& \biggl{(}\dfrac{|g'_{z}|}{(1+|g|^{2})\prod_{j=1}^{q}|g, {\alpha}_{j}|^{1-\eta}} \biggr{)}^{\lambda} |dz| \\
                &\leq & (C')^{\lambda}\biggl{(}\dfrac{R}{R^{2}-|z|^{2}} \biggr{)}^{\lambda} |dz|.
\end{eqnarray*}
Thus we have
$$
d(p)\leq \int_{{\Gamma}_{a_{0}}} ds =\int_{L_{a_{0}}} {\Phi}^{\ast} ds \leq (C')^{\lambda} \int_{L_{a_{0}}} \biggl{(}\dfrac{R}{R^{2}-|z|^{2}} \biggr{)}^{\lambda} |dz| 
\leq (C')^{\lambda}\dfrac{R^{1-\lambda}}{1-\lambda}\, (<+\infty)
$$
because $0< \lambda < 1$. Moreover, by (\ref{proof1-2}), we get that 
\begin{equation}\label{Proof1-3}
d(p)\leq \dfrac{C'}{1-\lambda}\biggl{(}\dfrac{1+|g(0)|^{2}}{|g'_{z}(0)|}\prod_{j=1}^{q}|g(0), {\alpha}_{j}|^{1-\eta} \biggr{)}^{1-\lambda}. 
\end{equation}
On the other hand, the Gaussian curvature $K_{ds^{2}}$ of the metric $ds^{2}=(1+|g|^{2})^{m}|\omega|^{2}$ is given by 
$$
K_{ds^{2}}=-\dfrac{2m|g'_{z}|^{2}}{(1+|g|^{2})^{m+2}|\hat{\omega}_{z}|^{2}}. 
$$
Thus, by (\ref{omega-omega}), we also get that
\begin{equation}\label{G-curvA}
|K_{ds^{2}}|^{1/2} = \sqrt{2m} \biggl{(}\dfrac{|g'_{z}|}{1+|g|^{2}} \biggl{)}^{1-\lambda}\prod_{j=1}^{q}|g, {\alpha}_{j}|^{(1-\eta)\lambda} . 
\end{equation}
Since $|g, {\alpha}_{j}|\leq 1$ for each $j$, we obtain that
\begin{equation}\label{Curv-est}
|K_{ds^{2}}(p)|^{1/2}d(p)\leq \dfrac{\sqrt{2m}C'}{1-\lambda} =: C. 
\end{equation}
By the definitions of $C'$ and $\lambda$, we see that $C$ is positive and depends on $m$, $q$ and $L:=\min_{i< j}|{\alpha}_{i}, {\alpha}_{j}|$.  
\end{proof}

\section{Applications}
In this section, we give several applications of our main results. 

\subsection{Gauss map of minimal surfaces in the Euclidean 3-space} 
We briefly recall some basic facts of minimal surfaces in ${\R}^{3}$. Details can be found, for example, in \cite{Fu1993} and \cite{Os1986}. 
Let $X=(x^{1}, x^{2}, x^{3})\colon \Sigma \to {\R}^{3}$ be an oriented minimal surface in ${\R}^{3}$. By associating a local complex coordinate 
$z=u+\sqrt{-1}v$ with each positive isothermal coordinate system $(u, v)$, $\Sigma$ is considered as a Riemann surface whose conformal metric 
is the induced metric $ds^{2}$ from ${\R}^{3}$. Then 
\begin{equation}\label{mini-harm}
{\bigtriangleup}_{ds^{2}}X=0
\end{equation}
holds, that is, each coordinate function $x^{i}$ is harmonic. With respect to the local complex coordinate $z=u+\sqrt{-1}v$ of 
the surface, (\ref{mini-harm}) is given by 
\begin{equation}\label{mini-del}
\bar{\partial}\partial X = 0,
\end{equation}
where $\partial = (\partial /\partial u - \sqrt{-1}\partial / \partial v)/ 2$ and $\bar{\partial}= (\partial /\partial u + \sqrt{-1}\partial / \partial v)/ 2$. 
Hence each ${\phi}_{i}:= \partial x^{i}dz$ $(i=1, 2, 3)$ is a 
holomorphic $1$-form on $\Sigma$. If we set 
\begin{equation}\label{mini-Weier}
\omega = {\phi}_{1}-\sqrt{-1}{\phi}_{2}, \quad g=\dfrac{{\phi}_{3}}{{\phi}_{1}-\sqrt{-1}{\phi}_{2}}, 
\end{equation}
then $\omega$ is a holomorphic $1$-form and $g$ is a meromorphic function on $\Sigma$. Moreover, the function $g$ coincides with the composition 
of the Gauss map and the stereographic projection from ${\Si}^{2}$ onto $\C\cup \{\infty\}$, and the induced metric $ds^{2}$ is given by 
\begin{equation}\label{mini-1st}
ds^{2}=(1+|g|^{2})^{2}|\omega|^{2}. 
\end{equation}

Applying Theorem \ref{main-1} to the metric $ds^{2}$, we can show the Fujimoto theorem for the Gauss map of minimal surfaces in ${\R}^{3}$. 

\begin{theorem}\cite[Theorem I and Corollary 3.4]{Fu1988}\label{mini-Fujimoto} 
Let $X\colon \Sigma \to {\R}^{3}$ be an oriented minimal surface whose Gauss map $g\colon \Sigma \to \C\cup \{\infty\}$ omits more than $4$ $(=2+2)$ distinct 
values. Then there exists a positive constant $C$ depending on the set of exceptional values, but not the surface, such that for all $p\in \Sigma$ the 
inequality (\ref{curvature-estimate}) holds. In particular, the Gauss map of a nonflat complete minimal surface in ${\R}^{3}$ can omit at most $4$ $(=2+2)$ values. 
\end{theorem}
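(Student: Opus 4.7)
The plan is to deduce Theorem \ref{mini-Fujimoto} directly from Theorem \ref{main-1} and Corollary \ref{omit-1}, specialized to the case $m=2$. First I would invoke the Weierstrass representation recalled just before the statement: if $X\colon \Sigma\to \R^{3}$ is an oriented minimal surface with Weierstrass data $(g,\omega)$ defined by (\ref{mini-Weier}), then by (\ref{mini-1st}) the induced metric satisfies $ds^{2}=(1+|g|^{2})^{2}|\omega|^{2}$. This realizes $ds^{2}$ as a conformal metric of exactly the shape (\ref{conformal}) with $m=2$, $\omega$ the Weierstrass holomorphic 1-form, and $g$ the (stereographically projected) Gauss map, so the hypotheses of Theorem \ref{main-1} are met on $\Sigma$.

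For the curvature estimate, the assumption that $g$ omits more than $4$ distinct values gives $q\ge 5 = m+3$, so Theorem \ref{main-1} applies verbatim with $m=2$ and produces a positive constant $C$, depending only on the set of exceptional values, such that (\ref{curvature-estimate}) holds at every $p\in \Sigma$. No further analytic work is needed here; the first assertion is just the specialization of the main theorem.

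For the Picard-type second assertion, I would add the hypotheses that $X$ is complete and nonflat and argue by contradiction. Completeness gives $d(p)=+\infty$ for every $p\in \Sigma$; if $g$ omitted $5$ or more values, then either by Corollary \ref{omit-1} applied with $m=2$ directly, or by letting $d(p)\to \infty$ in (\ref{curvature-estimate}), one forces $K_{ds^{2}}\equiv 0$ on $\Sigma$. By the Gauss curvature formula for (\ref{mini-1st}),
\[
K_{ds^{2}}=-\dfrac{4|g'_{z}|^{2}}{(1+|g|^{2})^{4}|\hat{\omega}_{z}|^{2}},
\]
vanishing of $K_{ds^{2}}$ is equivalent to $g$ being constant, which in turn means the minimal surface $X$ is flat and contradicts our assumption. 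Hence $g$ can omit at most $4$ values.

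Since Theorem \ref{main-1} does all the substantive analytic work, I do not expect a real obstacle in this deduction; the main thing to verify is the translation between the geometric language (Gauss map of a minimal surface, flatness) and the analytic setup of Theorem \ref{main-1}, namely that the Weierstrass data realize the induced metric in the form (\ref{conformal}) with $m=2$ and that nonflatness of $X$ is equivalent to nonconstancy of $g$. Both are immediate from the Weierstrass representation and the curvature formula above.
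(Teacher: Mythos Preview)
Your proposal is correct and follows exactly the approach indicated in the paper: the theorem is obtained by specializing Theorem \ref{main-1} and Corollary \ref{omit-1} to the induced metric (\ref{mini-1st}), i.e.\ the case $m=2$, using the Weierstrass representation to identify the data $(g,\omega)$ and nonflatness with nonconstancy of $g$.
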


\subsection{Lorentzian Gauss map of maxfaces in the Lorentz-Minkowski 3-space} 
Maximal surfaces in the  Lorentz-Minkowski 3-space ${\R}^{3}_{1}$ are closely related to minimal surfaces in ${\R}^{3}$. In this subsection 
we treat maximal surfaces with some admissible singularities, called {\it maxfaces}, as introduced by Umehara and Yamada \cite{UY2006}. 
We remark that maxfaces, 
non-branched generalized maximal surfaces in the sense of \cite{ER1992} and non-branched generalized maximal maps in the sense of \cite{IK2008} are 
all the same class of maximal surfaces. The Lorentz-Minkowski $3$-space ${\R}^{3}_{1}$ is the affine $3$-space ${\R}^{3}$ with the inner product 
$$
\langle \, , \, \rangle = -(dx^{1})^{2}+(dx^{2})^{2}+(dx^{3})^{2},
$$
where $(x^{1}, x^{2}, x^{3})$ is the canonical coordinate system of ${\R}^{3}$. We consider a fibration 
$$
p_{L}\colon {\C}^{3} \ni ({\zeta}^{1}, {\zeta}^{2}, {\zeta}^{3}) \mapsto \text{Re} (-\sqrt{-1}{\zeta}^{1}, {\zeta}^{2}, {\zeta}^{3}) \in {\R}^{3}_{1}. 
$$
The projection of null holomorphic immersions into  ${\R}^{3}_{1}$ by $p_{L}$ gives maxfaces. Here, a holomorphic map $F=(F_{1}, F_{2}, F_{3})\colon \Sigma \to {\C}^{3}$ 
is called {\it null} if $\{(F_{1})'_{z}\}^{2}+\{(F_{2})'_{z}\}^{2}+\{(F_{3})'_{z}\}^{2}$ vanishes identically, where $'=d / dz$ denotes the derivative with respect to 
a local complex coordinate $z$ of $\Sigma$. For maxfaces, an analogue of the Enneper-Weierstrass representation formula is known (see also \cite{Ko1983}). 

\begin{theorem}{\cite[Theorem 2.6]{UY2006}}\label{max-EW}
Let $\Sigma$ be a Riemann surface and $(g, \omega)$ a pair consisting of a meromorphic function and a holomorphic $1$-form on $\Sigma$ such that 
\begin{equation}\label{max-nullmet}
d{\sigma}^{2}:= (1+|g|^{2})^{2}|\omega|^{2}
\end{equation}
gives a (positive definite) Riemannian metric on $\Sigma$, and $|g|$ is not identically $1$. Assume that 
$$
\text{Re} \int_{\gamma} (-2g, 1+g^{2}, \sqrt{-1}(1-g^{2}))\, \omega = 0
$$
for all loops $\gamma$ in $\Sigma$. Then 
\begin{equation}\label{max-immer}
f= \text{Re} \int^{z}_{z_{0}} (-2g, 1+g^{2}, \sqrt{-1}(1-g^{2}))\, \omega
\end{equation}
is well-defined on $\Sigma$ and gives a maxface in ${\R}^{3}_{1}$, where $z_{0}\in {\Sigma}$ is a base point. Moreover, all maxfaces are obtained 
in this manner. The induced metric $ds^{2}:=f^{\ast} \langle \, , \, \rangle$ is given by
$$
ds^{2} = (1-|g|^{2})^{2}|\omega|^{2}, 
$$
and the point $p\in \Sigma$ is a singular point of $f$ if and only if $|g(p)|=1$. 
\end{theorem}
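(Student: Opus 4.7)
The plan is to execute an Enneper–Weierstrass-type construction, passing through a null holomorphic lift to $\C^{3}$, and then to reverse the recipe for the converse direction. The forward direction reduces to algebraic identities once the correct triple of $1$-forms is written down; the converse requires extracting the Weierstrass data from an arbitrary null holomorphic immersion.

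First, given $(g,\omega)$ on $\Sigma$, I would introduce the triple of holomorphic $1$-forms
$$
\phi_{1}:=-2\sqrt{-1}\,g\,\omega,\qquad \phi_{2}:=(1+g^{2})\omega,\qquad \phi_{3}:=\sqrt{-1}(1-g^{2})\omega,
$$
chosen so that $p_{L}$ applied to the $\C^{3}$-valued primitive of $(\phi_{1},\phi_{2},\phi_{3})$ reproduces the formula (\ref{max-immer}). Each $\phi_{i}$ is globally holomorphic because the hypothesis that $d\sigma^{2}=(1+|g|^{2})^{2}|\omega|^{2}$ is positive definite forces $\omega$ to vanish to order $2k$ at each pole of $g$ of order $k$, which absorbs the singularities of the factors $g$ and $g^{2}$. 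A one-line algebraic check yields the null identity
$$
\phi_{1}^{2}+\phi_{2}^{2}+\phi_{3}^{2}=\bigl[-4g^{2}+(1+g^{2})^{2}-(1-g^{2})^{2}\bigr]\omega^{2}=0.
$$
Integrating along paths in the universal cover produces a null holomorphic map $F\colon \tilde{\Sigma}\to \C^{3}$, and the stated period condition is exactly what is needed for $f:=p_{L}\circ F$ to descend to $\Sigma$. Moreover $F$ is an immersion, since $|\phi_{1}|^{2}+|\phi_{2}|^{2}+|\phi_{3}|^{2}=2(1+|g|^{2})^{2}|\omega|^{2}>0$ everywhere. Hence $f$ is a maxface by the definition recalled above.

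Next, I would compute the induced Lorentz metric. Writing $f_{1}=\mathrm{Im}\,F_{1}$ and $f_{j}=\mathrm{Re}\,F_{j}$ ($j=2,3$), a direct expansion of $-df_{1}^{2}+df_{2}^{2}+df_{3}^{2}$ kills the pure $dz^{2}$ and $d\bar{z}^{2}$ terms by nullity and leaves
$$
ds^{2}=\tfrac{1}{2}\bigl(-|\phi_{1}|^{2}+|\phi_{2}|^{2}+|\phi_{3}|^{2}\bigr).
$$
The identity $|1+g^{2}|^{2}+|1-g^{2}|^{2}=2(1+|g|^{4})$ together with $-4|g|^{2}+2+2|g|^{4}=2(1-|g|^{2})^{2}$ collapses this to $(1-|g|^{2})^{2}|\omega|^{2}$, the claimed formula. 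The point $p$ is singular iff $ds^{2}$ degenerates at $p$; the same pole-cancellation argument shows $(1-|g|^{2})^{2}|\omega|^{2}$ remains positive at poles of $g$, so the singular set is exactly $\{p\in \Sigma:|g(p)|=1\}$.

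For the converse, given a maxface $f=p_{L}\circ F$ with null holomorphic lift $F$, set $\phi_{i}:=F_{i}'\,dz$ and factor the null relation as $\phi_{1}^{2}=-(\phi_{2}-\sqrt{-1}\phi_{3})(\phi_{2}+\sqrt{-1}\phi_{3})$. Defining $\omega:=(\phi_{2}-\sqrt{-1}\phi_{3})/2$ and $g:=\sqrt{-1}\phi_{1}/(2\omega)$ recovers $\phi_{1},\phi_{2},\phi_{3}$ in the stated form by pure algebra, and the period condition on $f$ translates to the one in the statement. The main obstacle here is to show that $g$ is globally meromorphic and $\omega$ globally holomorphic: one must verify that at a zero of $\phi_{2}-\sqrt{-1}\phi_{3}$ the quotient defining $g$ extends meromorphically (the null relation forces compatible orders of vanishing of $\phi_{1}$, modulo the mild sign/branch choice corresponding to whether $\phi_{2}-\sqrt{-1}\phi_{3}$ or $\phi_{2}+\sqrt{-1}\phi_{3}$ is the nonvanishing factor), and that the hypothesis $|g|\not\equiv 1$ corresponds to the assumption that the maxface is not everywhere singular.
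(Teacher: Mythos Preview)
The paper does not prove this theorem: it is quoted verbatim from \cite[Theorem 2.6]{UY2006} and no proof appears in the present text. Consequently there is nothing here to compare your argument against.

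That said, your sketch is a correct outline of the standard Enneper--Weierstrass argument for maxfaces. The algebraic identities (nullity, $|\phi_{1}|^{2}+|\phi_{2}|^{2}+|\phi_{3}|^{2}=2(1+|g|^{2})^{2}|\omega|^{2}$, and $-|\phi_{1}|^{2}+|\phi_{2}|^{2}+|\phi_{3}|^{2}=2(1-|g|^{2})^{2}|\omega|^{2}$) all check, and your handling of the pole--zero cancellation is right. The converse direction is the only place you are genuinely sketchy: to make it airtight you must treat the case where $\phi_{2}-\sqrt{-1}\phi_{3}$ vanishes identically on a component (then $g\equiv\infty$ and one swaps the roles of the two factors), and verify explicitly that the resulting $(g,\omega)$ satisfies the positive-definiteness hypothesis on $d\sigma^{2}$, which follows from $F$ being an immersion. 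None of this is a gap in the idea, only in the write-up.
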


We call $g$ the {\it Lorentzian Gauss map} of $f$. If $f$ has no singularities, then $g$ coincides with the composition of the 
Gauss map (i.e., (Lorentzian) unit normal vector) $n \colon \Sigma \to {\H}^{2}_{\pm}$ into the upper or lower connected component of 
the two-sheet hyperboloid ${\H}^{2}_{\pm}={\H}^{2}_{+}\cup {\H}^{2}_{-}$ in ${\R}^{3}_{1}$, where 
\begin{eqnarray}
{\H}^{2}_{+} &:=& \{n=(n^{1}, n^{2}, n^{3})\in {\R}^{3}_{1}\, ; \, \langle n, n \rangle = -1, n^{1}> 0\}, \nonumber \\ 
{\H}^{2}_{-} &:=& \{n=(n^{1}, n^{2}, n^{3})\in {\R}^{3}_{1}\, ; \, \langle n, n \rangle = -1, n^{1}< 0\},   \nonumber
\end{eqnarray}
and the stereographic projection from the north pole $(1, 0, 0)$ of the hyperboloid onto the Riemann sphere $\C\cup \{\infty\}$ (see \cite[Section 1]{UY2006}). 
A maxface is said to be {\it weakly complete} if the metric $d{\sigma}^{2}$ as in (\ref{max-nullmet}) is complete. 
We note that $(1/2)d{\sigma}^{2}$ coincides with 
the pull-back of the standard metric on ${\C}^{3}$ by the null holomorphic immersion of $f$ (see \cite[Section 2]{UY2006}). 

Applying Theorem \ref{main-1} to the metric $d{\sigma}^{2}$, we can get the following theorem. 
\begin{theorem}\label{max-Fujimoto}
Let $f\colon \Sigma \to {\R}^{3}_{1}$ be a maxface whose Lorentzian Gauss map $g\colon \Sigma \to \C\cup \{\infty\}$ omits more than $4$ $(=2+2)$ distinct 
values. Then there exists a positive constant $C$ depending on the set of exceptional values, but not $\Sigma$, such that for all $p\in \Sigma$ we have 
$$
|K_{d{\sigma}^{2}}(p)|^{1/2}\leq \dfrac{C}{d(p)}, 
$$
where $K_{d{\sigma}^{2}}(p)$ is the Gaussian curvature of the metric $d{\sigma}^{2}$ at $p$ and $d(p)$ is the geodesic distance from $p$ to  
the boundary of $\Sigma$. In particular, the Lorentzian Gauss map of a nonflat weakly complete maxface in ${\R}^{3}_{1}$ can omit at most $4$ $(=2+2)$ values.
\end{theorem}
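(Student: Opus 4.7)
The plan is to observe that the metric $d{\sigma}^{2} = (1+|g|^{2})^{2}|\omega|^{2}$ from (\ref{max-nullmet}) is exactly the conformal metric (\ref{conformal}) with $m=2$, so Theorem \ref{main-1} applies directly. Since $g$ omits strictly more than $4 = m+2$ distinct values, it omits $q \geq 5 = m+3$ values, which is the precise hypothesis of Theorem \ref{main-1}. Applying that theorem produces a positive constant $C$, depending only on $m=2$ and the set of exceptional values of $g$, with $|K_{d{\sigma}^{2}}(p)|^{1/2} \leq C/d(p)$ for every $p \in \Sigma$. This already proves the curvature estimate of Theorem \ref{max-Fujimoto}.

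For the Picard-type second half I would follow the proof of Corollary \ref{omit-1} essentially verbatim. If $f$ is weakly complete then $d{\sigma}^{2}$ is complete on $\Sigma$, so $d(p) = \infty$ for every $p$, forcing $K_{d{\sigma}^{2}} \equiv 0$ by the inequality just established. The $m=2$ specialization of the Gaussian curvature formula used in the proof of Corollary \ref{omit-1},
\begin{equation*}
K_{d{\sigma}^{2}} = -\dfrac{4|g'_{z}|^{2}}{(1+|g|^{2})^{4}|\hat{\omega}_{z}|^{2}},
\end{equation*}
then forces $g'_{z} \equiv 0$, so $g$ is constant on $\Sigma$.

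Finally I would upgrade ``$g$ constant'' to ``$f$ flat'' in order to contradict the nonflatness hypothesis. Substituting $g \equiv c \in \C$ into the Enneper--Weierstrass-type formula (\ref{max-immer}) turns the $\C^{3}$-valued form $(-2g, 1+g^{2}, \sqrt{-1}(1-g^{2}))\,\omega$ into a fixed vector of ${\R}^{3}_{1}$ times the holomorphic $1$-form $\omega$; integrating, $f$ parametrizes a piece of an affine $2$-plane in ${\R}^{3}_{1}$, which is flat. The entire argument is a mechanical translation of Corollary \ref{omit-1} into the maxface setting, and I do not anticipate any real obstacle beyond this last elementary check that a constant Lorentzian Gauss map gives a flat maxface; it is immediate from the Weierstrass representation (\ref{max-immer}).
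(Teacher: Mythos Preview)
Your proposal is correct and matches the paper's approach exactly: the paper simply remarks that applying Theorem \ref{main-1} with $m=2$ to the metric $d\sigma^{2}=(1+|g|^{2})^{2}|\omega|^{2}$ yields the result, and the Picard-type consequence is the $m=2$ instance of Corollary \ref{omit-1}. Your additional verification that a constant Lorentzian Gauss map forces $f$ to be planar via the representation (\ref{max-immer}) is a welcome detail the paper leaves implicit.
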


As a corollary of this result, we give a simple new proof of the Calabi-Bernstein theorem (\cite{Ca1970}, \cite{CY1976}) for maximal space-like surfaces in 
${\R}^{3}_{1}$ from the viewpoint of value-distribution-theoretic properties of the Lorentzian Gauss map. We remark that Al\'ias and Palmer \cite{AP2001}, 
Estudillo and Romero \cite{ER1991, ER1992, ER1994}, Osamu Kobayashi \cite{Ko1983}, Romero \cite{Ro1996} and Umehara and Yamada \cite{UY2006} have 
approached this theorem from other viewpoints.

\begin{corollary}\label{Calabi-Bernstein}
Any complete maximal space-like surface in ${\R}^{3}_{1}$ must be a plane. 
\end{corollary}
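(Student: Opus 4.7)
The plan is to derive Corollary \ref{Calabi-Bernstein} as an almost immediate consequence of Theorem \ref{max-Fujimoto}, by comparing the induced Lorentzian metric $ds^{2}=(1-|g|^{2})^{2}|\omega|^{2}$ with the auxiliary metric $d\sigma^{2}=(1+|g|^{2})^{2}|\omega|^{2}$ that governs weak completeness.

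First, I would recall that a complete maximal space-like surface $f\colon \Sigma \to \R^{3}_{1}$ is, in particular, a maxface in the sense of \cite{UY2006} with no singular points. Since singular points of a maxface are exactly the points where $|g|=1$, the function $|g|$ never takes the value $1$ on $\Sigma$. By continuity and connectedness of $\Sigma$, either $|g|<1$ everywhere or $|g|>1$ everywhere; after composing $f$ with the reflection $(x^{1},x^{2},x^{3})\mapsto(-x^{1},x^{2},x^{3})$ of $\R^{3}_{1}$ (which swaps $\H^{2}_{+}$ and $\H^{2}_{-}$ and therefore replaces $g$ by $1/g$), I may assume that the Lorentzian Gauss map takes values in the open unit disk, $|g|<1$ on $\Sigma$.

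Next, I would observe that the pointwise inequality $(1+|g|^{2})^{2}\geq (1-|g|^{2})^{2}$ gives $d\sigma^{2}\geq ds^{2}$, so completeness of $ds^{2}$ implies completeness of $d\sigma^{2}$; that is, $f$ is a weakly complete maxface. Because $g$ takes values only in the unit disk, it omits every point of $\{w\in \C\cup\{\infty\}\,;\,|w|\geq 1\}$, in particular infinitely many distinct values, so certainly more than $4$. Theorem \ref{max-Fujimoto} then applies: for any $p\in \Sigma$ one has $|K_{d\sigma^{2}}(p)|^{1/2}\leq C/d(p)$, and completeness of $d\sigma^{2}$ forces $d(p)=+\infty$, so $K_{d\sigma^{2}}\equiv 0$ on $\Sigma$.

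Finally, I would use the curvature expression
\begin{equation*}
K_{d\sigma^{2}}=-\frac{4|g'_{z}|^{2}}{(1+|g|^{2})^{4}|\hat{\omega}_{z}|^{2}}
\end{equation*}
(the case $m=2$ of the formula in the proof of Corollary \ref{omit-1}) to conclude that $g'_{z}\equiv 0$, i.e., $g$ is a constant $c\in \C$ with $|c|<1$. Plugging $g\equiv c$ into the Weierstrass representation (\ref{max-immer}) gives
\begin{equation*}
f(z)=\mathrm{Re}\Bigl((-2c,\,1+c^{2},\,\sqrt{-1}(1-c^{2}))\int^{z}_{z_{0}}\omega\Bigr),
\end{equation*}
which is the real part of a fixed vector in $\C^{3}$ times a holomorphic function of $z$, hence parametrizes (an open piece of) an affine plane in $\R^{3}_{1}$. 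Completeness then forces this to be a whole plane, proving the corollary.

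The only delicate step is the metric comparison that bridges completeness of the induced Lorentzian metric and weak completeness in the sense of Umehara--Yamada; once this is in place the result is a direct corollary of the curvature estimate, with no Nevanlinna-theoretic work beyond what is encoded in Theorem \ref{max-Fujimoto}.
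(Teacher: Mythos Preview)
Your proof is correct and follows essentially the same approach as the paper: the key steps---the metric comparison $ds^{2}\leq d\sigma^{2}$ yielding weak completeness, the observation that $g$ omits more than four values because $|g|\neq 1$, and the appeal to Theorem~\ref{max-Fujimoto}---are exactly those of the paper's proof. The only difference is that the paper skips your reduction to $|g|<1$, noting directly that the omitted set $\{|g|=1\}$ already contains infinitely many values, and it invokes the ``In particular'' clause of Theorem~\ref{max-Fujimoto} rather than rederiving the constancy of $g$ from the curvature formula.
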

\begin{proof}
Since a maximal space-like surface has no singularities, the complement of the image of $g$ contains at least the set $\{|g|=1\}\subset \C\cup \{\infty\}$. 
On the other hand, we obtain 
$$
ds^{2}=(1-|g|^{2})^{2}|\omega|^{2}\leq (1+|g|^{2})^{2}|\omega|^{2}=d{\sigma}^{2}. 
$$
Thus if $ds^{2}$ is complete, then $d{\sigma}^{2}$ is also complete. 
By Theorem \ref{max-Fujimoto}, its Lorentzian Gauss map is constant, that is, it is a plane. 
\end{proof}

\subsection{Lagrangian Gauss map of improper affine fronts in the affine 3-space}
Improper affine spheres in the affine $3$-space ${\R}^{3}$ also have similar properties to minimal surfaces in the Euclidean $3$-space. 
Recently, Mart\'inez \cite{Ma2005} discovered the correspondence between improper affine spheres and smooth special Lagrangian immersions in 
the complex $2$-space ${\C}^{2}$ and introduced the notion of {\it improper affine fronts}, that is, a class of (locally strongly convex) improper 
affine spheres with some admissible singularities in ${\R}^{3}$. We note that this class is called 
``improper affine maps'' in \cite{Ma2005}, but we call this class ``improper affine fronts'' because Nakajo \cite{Na2009} 
and Umehara and Yamada \cite{UY2011, UY2012} showed that all improper affine maps are wave fronts in ${\R}^{3}$. 
The differential geometry of wave fronts is discussed in \cite{SUY2009}. 
Moreover, Mart\'inez gave the following holomorphic representation for this class. 

\begin{theorem}\cite[Theorem 3]{Ma2005}
Let $\Sigma$ be a Riemann surface and $(F, G)$ a pair of holomorphic functions on $\Sigma$ such that $\text{Re}(FdG)$ is exact and 
$|dF|^{2}+|dG|^{2}$ is positive definite. Then the induced map $\psi\colon \Sigma \to {\R}^{3}=\C\times {\R}$ given by 
$$
\psi :=\biggl{(}G+\overline{F}, \dfrac{|G|^{2}-|F|^{2}}{2}+\text{Re}\biggl{(} GF- 2\int FdG \biggr{)} \biggr{)}
$$
is an improper affine front. Conversely, any improper affine front is given in this way. 
Moreover we set $x:= G+\overline{F}$ and $n:= \overline{F}-G$. Then $L_{\psi}:=x+\sqrt{-1}n\colon \Sigma \to {\C}^{2}$ is a special Lagrangian 
immersion whose induced metric $d{\tau}^{2}$ from ${\C}^{2}$ is given by 
$$
d{\tau}^{2}=2(|dF|^{2}+|dG|^{2}). 
$$
In addition, the affine metric $h$ of $\psi$ is expressed as $h:=|dG|^{2}-|dF|^{2}$ and the singular points of $\psi$ correspond to the 
points where $|dF|=|dG|$. 
\end{theorem}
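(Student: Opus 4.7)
My plan is to split the theorem into a forward direction (the pair $(F,G)$ produces an improper affine front with the stated structures) and a converse direction (every improper affine front arises this way), and to treat the auxiliary claims about $L_\psi$, $d\tau^2$, $h$ and the singular set as consequences of the formulas that drop out along the way.

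For the forward direction, the plan is to verify the improper affine sphere equation directly. Writing $\psi=(x,x_3)$ with $x=G+\overline{F}\in\C$ and $x_3=(|G|^2-|F|^2)/2+\mathrm{Re}(GF-2\int F\,dG)$, I would compute $dx=dG+d\overline{F}$ and differentiate $x_3$ using $d(\int F\,dG)=F\,dG$ to find $dx_3=\mathrm{Re}\bigl((\overline{G}-F)\,dG+(\overline{F}-G)\,d\overline{F}\bigr)$ (the exactness assumption on $\mathrm{Re}(F\,dG)$ is what makes $x_3$ globally well-defined). Since $(F,G)$ are holomorphic and everything is polynomial in $dF,d\overline F,dG,d\overline G$, a short computation shows that the graph $x\mapsto x_3(x)$ satisfies the Monge--Amp\`ere equation $\det(\mathrm{Hess}\,x_3)=1$ away from the singular set; this is the classical defining equation of an improper affine sphere with affine normal $(0,0,1)$. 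To identify the singular set and the front structure, I would compute $d\psi$ and show that its rank drops exactly when $|dF|=|dG|$, while the Legendrian lift of $\psi$ with contact element given by $\overline{F}-G$ is everywhere an immersion because $|dF|^2+|dG|^2$ is positive definite. The formulas for the induced metric of $L_\psi=x+\sqrt{-1}\,n$ and for the affine metric then follow at once: $|dx|^2+|dn|^2=2(|dF|^2+|dG|^2)$ by a symmetric expansion, and $|dG|^2-|dF|^2$ is the explicit second fundamental form expression for $h$ on the graph. Checking that $L_\psi$ is special Lagrangian reduces to verifying that $L_\psi^{\ast}\omega=0$ and $L_\psi^{\ast}\mathrm{Im}(e^{-i\theta}\,dz_1\wedge dz_2)=0$ for an appropriate phase $\theta$; substitution of $dx,dn$ collapses the cross terms $dG\wedge dG=d\overline{F}\wedge d\overline{F}=0$ and leaves only a real multiple of $dz\wedge d\overline{z}$, which settles both conditions.

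For the converse, the plan is to use the affine-geometric structure on an abstract improper affine front $\psi$ to manufacture $F$ and $G$. Because the affine metric $h$ is non-degenerate on the regular part and extends across the singular set (this is part of the front hypothesis in \cite{Ma2005, Na2009, UY2011}), it defines a conformal, hence complex, structure on $\Sigma$. On a local complex chart $z$, writing $\psi=(x,x_3)$ with $x:\Sigma\to\C$ and using that $\det(\mathrm{Hess}\,x_3)=1$, one shows that $\partial x$ and $\partial\overline{x}$ decouple into holomorphic $1$-forms; concretely, setting $dG=\partial x$ and $d\overline{F}=\overline{\partial}x$ and invoking the improper affine sphere equation shows that both $F$ and $G$ are holomorphic and that $\mathrm{Re}(F\,dG)$ is exact because it equals $-\tfrac12 dx_3$ plus an exact form coming from the expression for $x_3$ above. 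A period argument then produces global $F$ and $G$ on the universal cover descending to $\Sigma$ up to the exactness condition.

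The main obstacle I expect is the converse, and specifically the step of showing that the complex structure coming from $h$ is compatible with the holomorphy of $F,G$ in the presence of singular points: on the regular locus this is standard affine differential geometry, but near points where $|dF|=|dG|$ the affine metric degenerates and one must use the front (wave front) hypothesis on $\psi$ to extend the complex structure and the holomorphy across the singular set. The Monge--Amp\`ere computation in the forward direction, while conceptually straightforward, is also the sort of calculation where sign and normalization errors can easily invalidate the identification of the affine normal; I would carry it out in the standard graph form, then translate back into the invariant language of $(F,G,\omega)$.
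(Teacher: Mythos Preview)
The paper does not actually prove this theorem: it is quoted verbatim as \cite[Theorem~3]{Ma2005} and used only as background for the applications in Section~3.3. There is therefore no ``paper's own proof'' to compare your proposal against; the author relies entirely on Mart\'inez's original argument and on the subsequent observation in \cite{Na2009, UY2011, UY2012} that improper affine maps are wave fronts.

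That said, your outline is a reasonable reconstruction of how such a representation theorem is established, and it follows the standard pattern (forward verification via the Monge--Amp\`ere equation for the graph, converse via the conformal structure determined by the affine metric). Two points deserve care if you carry it out in full. First, in the converse direction you write ``setting $dG=\partial x$ and $d\overline{F}=\overline{\partial}x$''; for this to make sense you must already know that $\partial x$ is a holomorphic $1$-form, which is precisely what the improper affine sphere equation buys you once the conformal structure is fixed---so the logical order should be: choose isothermal coordinates for $h$, then deduce holomorphy of $\partial x$ from $\det(\mathrm{Hess}\,x_3)=1$, and only then define $G$ and $F$. Second, your plan to extend the complex structure across the singular set ``using the front hypothesis'' is the genuinely delicate step; in Mart\'inez's original treatment the objects are defined first on the regular locus and the extension is obtained a posteriori, while the identification of improper affine maps as wave fronts (hence the terminology ``front'') is a later result of \cite{Na2009, UY2011}. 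If you intend a self-contained proof you should separate these two issues rather than invoking the front structure as an input.
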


The nontrivial part of the Gauss map of $L_{\psi}\colon \Sigma \to {\C}^{2}\simeq {\R}^{4}$ (see \cite{CM1987}) is the meromorphic function 
$\nu\colon \Sigma \to \C\cup\{\infty \}$ given by 
$$
\nu := \dfrac{dF}{dG}, 
$$
which is called the {\it Lagrangian Gauss map} of $\psi$. An improper affine front is said to be {\it weakly complete} if the induced metric 
$d{\tau}^{2}$ is complete. We note that 
$$
d{\tau}^{2}=2(|dF|^{2}+|dG|^{2})=2(1+|\nu|^{2})|dG|^{2}. 
$$

Applying Theorem \ref{main-1} to the metric $d{\tau}^{2}$, we can get the following theorem. This is a generalization of \cite[Theorem 3.2]{KN2012}. 

\begin{theorem}\label{affine-Fujimoto}
Let $\psi\colon \Sigma \to {\R}^{3}$ be an improper affine front whose Lagrangian Gauss map $\nu\colon \Sigma \to \C\cup \{\infty\}$ 
omits more than $3$ $(=1+2)$ distinct values. 
Then there exists a positive constant $C$ depending on the set of exceptional values, but not $\Sigma$, such that for all $p\in \Sigma$ we have 
$$
|K_{d{\tau}^{2}}(p)|^{1/2}\leq \dfrac{C}{d(p)}, 
$$
where $K_{d{\tau}^{2}}(p)$ is the Gaussian curvature of the metric $d{\tau}^{2}$ at $p$ and $d(p)$ is the geodesic distance from $p$ to  
the boundary of $\Sigma$. In particular, if the Lagrangian Gauss map of a weakly complete improper affine front in ${\R}^{3}$ is nonconstant, 
then it can omit at most $3$ $(=1+2)$ values. 
\end{theorem}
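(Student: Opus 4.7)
The plan is to recognize that the induced metric $d\tau^{2}$ on an improper affine front fits exactly into the framework of Theorem \ref{main-1} with $m=1$. Specifically, since $G$ is holomorphic on $\Sigma$, the $1$-form $\omega := \sqrt{2}\,dG$ is a holomorphic $1$-form, and $\nu = dF/dG$ is a meromorphic function on $\Sigma$. Rewriting the Mart\'inez formula for $d\tau^{2}$ gives
\begin{equation*}
d\tau^{2}=2(1+|\nu|^{2})|dG|^{2}=(1+|\nu|^{2})^{1}|\omega|^{2},
\end{equation*}
which is precisely the conformal metric (\ref{conformal}) in the case $m=1$, with $g$ replaced by $\nu$.

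Once this identification is in place, the first assertion is immediate: assuming $\nu$ omits $q\geq 4 = 1+3$ distinct values, Theorem \ref{main-1} applied to $(\Sigma, d\tau^{2})$ yields a positive constant $C$, depending only on $m=1$ and the exceptional value set (not on $\Sigma$), such that
\begin{equation*}
|K_{d\tau^{2}}(p)|^{1/2}\leq \frac{C}{d(p)}
\end{equation*}
for all $p\in\Sigma$, where $d(p)$ is the distance from $p$ to the ideal boundary of $\Sigma$ with respect to $d\tau^{2}$. This is exactly the estimate claimed.

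For the second statement, I would argue by contradiction in the spirit of Corollary \ref{omit-1}. Weak completeness of $\psi$ means that $d\tau^{2}$ is complete, so $d(p)=\infty$ for every $p\in\Sigma$. If $\nu$ were to omit $q\geq 4$ distinct values, the curvature estimate above would force $K_{d\tau^{2}}\equiv 0$ on $\Sigma$. On the other hand, the explicit formula for the curvature of the metric $(1+|\nu|^{2})|\omega|^{2}$ (with $m=1$) used in the proof of Corollary \ref{omit-1} shows that $K_{d\tau^{2}}\equiv 0$ is equivalent to $\nu$ being constant, contradicting the hypothesis. Hence $\nu$ can omit at most $3 = 1+2$ values.

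The main conceptual step—indeed the only nontrivial one—is the metric identification $d\tau^{2}=(1+|\nu|^{2})|\omega|^{2}$ with $\omega=\sqrt{2}\,dG$, together with verifying that $\omega$ is a \emph{holomorphic} $1$-form (which hinges on $G$ being holomorphic in Mart\'inez's representation, not merely harmonic as in the Weierstrass data for minimal surfaces). The rest is a direct specialization of Theorem \ref{main-1} and Corollary \ref{omit-1} to $m=1$; no further analytic work is required, and the bound $m+2=3$ is optimal in view of Proposition \ref{Voss}.
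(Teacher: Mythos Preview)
Your proposal is correct and follows exactly the approach of the paper: the paper simply notes $d\tau^{2}=2(1+|\nu|^{2})|dG|^{2}$ and states that Theorem \ref{main-1} (with $m=1$) applied to this metric yields the result, with the ``in particular'' clause following from Corollary \ref{omit-1}. Your write-up is in fact more detailed than the paper's, which gives no proof beyond the one-line reduction to Theorem \ref{main-1}.
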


Since the singular points of $\psi$ correspond to the points where $|\nu|=1$, we can obtain a simple proof of the parametric affine Bernstein 
theorem (\cite{Ca1958}, \cite{Jo1954}) for improper affine spheres from the viewpoint of value-distribution-theoretic properties of the Lagrangian Gauss map. 
For the proof, see \cite[Corollary 3.6]{KN2012}. 

\begin{corollary}\label{affine-Bernstein}
Any affine complete improper affine sphere in ${\R}^{3}$ must be an elliptic paraboloid. 
\end{corollary}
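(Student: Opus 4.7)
The plan is to reduce the geometric statement to the value-distribution result in Theorem \ref{affine-Fujimoto}. First I would observe that an improper affine sphere, viewed as an improper affine front, has no singular points; hence $|\nu(p)|\neq 1$ for every $p\in\Sigma$. In particular, $\nu$ omits every value on the unit circle $\{w\in\C:|w|=1\}$, which is an infinite set and a fortiori contains more than $3$ distinct values. So the Lagrangian Gauss map trivially satisfies the exceptional-value hypothesis of Theorem \ref{affine-Fujimoto} by a large margin; the only remaining hypothesis to arrange is completeness of $d\tau^{2}$.

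Next I would verify that affine completeness implies weak completeness. Since $|dF|<|dG|$ on a (locally strongly convex) improper affine sphere, the affine metric $h=|dG|^{2}-|dF|^{2}$ and the null-lift metric $d\tau^{2}=2(|dF|^{2}+|dG|^{2})$ satisfy $h\leq \tfrac{1}{2}d\tau^{2}$ pointwise. Consequently, the $h$-length of any path is dominated by its $d\tau^{2}$-length, so every $d\tau^{2}$-divergent curve is also $h$-divergent; if $h$ is complete, so is $d\tau^{2}$. Applying Theorem \ref{affine-Fujimoto} to this weakly complete $\psi$, we conclude that $\nu$ must be constant.

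Finally I would recover the geometric conclusion from $\nu\equiv c$ constant (necessarily with $|c|\neq 1$). Since $\nu=dF/dG$, this gives $dF=c\,dG$, so $F=cG+c_{0}$ for some constant $c_{0}\in\C$. Moreover, as $G$ is then (up to an additive constant) a global holomorphic chart on $\Sigma$ and the induced affine metric $h=(1-|c|^{2})|dG|^{2}$ is flat and complete, $\Sigma$ must be biholomorphic to $\C$ with $G$ as a global coordinate. Substituting $F=cG+c_{0}$ into Mart\'inez's representation formula for $\psi$ reduces $\psi$ to an explicit quadratic expression in $G$ and $\overline{G}$, which one identifies as a parametrization of an elliptic paraboloid in $\R^{3}$. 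The main obstacle is really the first step, namely identifying the right completeness comparison $h\leq\tfrac{1}{2}d\tau^{2}$ so that Theorem \ref{affine-Fujimoto} becomes applicable; once $\nu$ is known to be constant, the remaining computations identifying the elliptic paraboloid are routine.
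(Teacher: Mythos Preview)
Your proposal is correct and follows essentially the same approach that the paper indicates (and that is spelled out in the parallel proof of Corollary~\ref{Calabi-Bernstein}): use the absence of singularities to see that $|\nu|\neq 1$ so that $\nu$ omits infinitely many values, compare $h\le \tfrac{1}{2}\,d\tau^{2}$ to pass from affine completeness to weak completeness, apply Theorem~\ref{affine-Fujimoto} to force $\nu$ constant, and then identify the surface. The paper itself defers the details to \cite[Corollary~3.6]{KN2012}, but your outline matches that argument; the only cosmetic slip is the phrase ``every $d\tau^{2}$-divergent curve is also $h$-divergent,'' where what you actually need (and use) is that a divergent curve of finite $d\tau^{2}$-length would have finite $h$-length, contradicting affine completeness.
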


\subsection{Ratio of canonical forms of flat fronts in the hyperbolic 3-space} 
We denote by ${\H}^{3}$ the hyperbolic $3$-space, that is, the simply connected Riemannian $3$-manifold with constant sectional curvature $-1$, 
which is represented as 
$$
{\H}^{3}= SL(2, \C) / SU(2) =\{aa^{\ast} \,;\, a\in SL(2, \C) \} \quad (a^{\ast}:=\tr{\bar{a}}). 
$$ 
For a holomorphic Legendrian immersion $\Lc\colon \Sigma \to SL(2, \C)$ on a simply connected Riemann surface $\Sigma$, the projection 
$$
f:= \Lc{\Lc}^{\ast}\colon \Sigma \to {\H}^{3}
$$
gives a {\it flat front} in ${\H}^{3}$. Here, flat fronts in ${\H}^{3}$ are flat surfaces in ${\H}^{3}$ with some admissible singularities  
(see \cite{KRUY2007}, \cite{KUY2004} for the definition of flat fronts in ${\H}^{3}$). We call $\Lc$ the {\it holomorphic lift} of $f$. 
Since $\Lc$ is a holomorphic Legendrian map, ${\Lc}^{-1}{d\Lc}$ is off-diagonal (see \cite{GMM2000}, \cite{KUY2003}, \cite{KUY2004}). 
If we set 
$$
{\Lc}^{-1}{d\Lc} = \left(
\begin{array}{cc}
0      & \theta  \\
\omega & 0
\end{array}
\right), 
$$
then the pull-back of the canonical Hermitian metric of $SL(2, \C)$ by $\Lc$ is represented as 
$$
ds^{2}_{\Lc}:=|\omega|^{2}+|\theta|^{2}
$$ 
for holomorphic $1$-forms $\omega$ and $\theta$ on $\Sigma$. A flat front $f$ is said to be {\it weakly complete} 
if the metric $ds^{2}_{\Lc}$ is complete \cite{KRUY2009, UY2011}. We define a meromorphic function on $\Sigma$ by the ratio of canonical forms 
$$
\rho := \dfrac{\theta}{\omega}. 
$$
Then a point $p\in \Sigma$ is a singular point of $f$ if and only if $|\rho (p)|=1$ \cite{KRSUY2005}. We note that 
$$
ds^{2}_{\Lc}=|\omega|^{2}+|\theta|^{2}=(1+|\rho|^{2})|\omega|^{2}. 
$$

Applying Theorem \ref{main-1} to the metric $ds^{2}_{\Lc}$, we can get the following theorem. This is a generalization of \cite[Theorem 4.5]{KN2012}. 
\begin{theorem}\label{flat-Fujimoto}
Let $f\colon \Sigma \to {\H}^{3}$ be a flat front on a simply connected  Riemann surface $\Sigma$. 
Suppose that the ratio of canonical forms $\rho\colon \Sigma \to \C\cup \{\infty\}$ omits more than $3$ $(=1+2)$ distinct values. 
Then there exists a positive constant $C$ depending on the set of exceptional values, but not $\Sigma$, such that for all $p\in \Sigma$ we have 
$$
|K_{ds_{\Lc}^{2}}(p)|^{1/2}\leq \dfrac{C}{d(p)}, 
$$
where $K_{ds^{2}_{\Lc}}(p)$ is the Gaussian curvature of the metric $ds^{2}_{\Lc}$ at $p$ and $d(p)$ is the geodesic distance from $p$ to  
the boundary of $\Sigma$. In particular, if the ratio of canonical forms of a weakly complete flat front in ${\H}^{3}$ is nonconstant, 
then it can omit at most $3$ $(=1+2)$ values. 
\end{theorem}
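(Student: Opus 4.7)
The plan is to recognize Theorem \ref{flat-Fujimoto} as essentially a direct application of the main curvature bound Theorem \ref{main-1}, together with its Corollary \ref{omit-1}, in the particular case $m=1$. The key observation is the factorization of the lift metric: from $\theta=\rho\,\omega$ we have
\begin{equation*}
ds^{2}_{\Lc}=|\omega|^{2}+|\theta|^{2}=(1+|\rho|^{2})|\omega|^{2},
\end{equation*}
which is precisely the conformal metric in (\ref{conformal}) with exponent $m=1$, holomorphic $1$-form $\omega$, and meromorphic function $g=\rho$. One must briefly verify the data: $\omega$ is the off-diagonal entry of ${\Lc}^{-1}d\Lc$ (so it is a globally defined holomorphic $1$-form on the simply connected surface $\Sigma$), and $\rho=\theta/\omega$ is a quotient of two holomorphic $1$-forms, hence a well-defined meromorphic function on $\Sigma$; the metric remains positive definite on all of $\Sigma$ because $\Lc$ is an immersion, so $\omega$ and $\theta$ cannot vanish simultaneously.

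With these identifications, the hypothesis ``$\rho$ omits more than $3$ distinct values'' becomes precisely ``$g$ omits $q\geq m+3=4$ distinct values'' in the notation of Theorem \ref{main-1}. Applying that theorem yields a positive constant $C$, depending only on $m=1$ and the exceptional set (hence not on $\Sigma$), satisfying
\begin{equation*}
|K_{ds^{2}_{\Lc}}(p)|^{1/2}\leq \dfrac{C}{d(p)}
\end{equation*}
for every $p\in\Sigma$, which is the desired curvature estimate. For the ``In particular'' statement, assume $f$ is weakly complete, so that $ds^{2}_{\Lc}$ is complete, and suppose for contradiction that the nonconstant $\rho$ omits at least $m+3=4$ distinct values. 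Then Corollary \ref{omit-1} (with $m=1$) gives an immediate contradiction, since under completeness it forces $\rho$ to omit at most $m+2=3$ values whenever $\rho$ is nonconstant.

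Since the argument is purely a matter of matching notation and invoking the results of Section 1, there is no real analytic obstacle. The only point worth handling carefully is the verification that the $(1+|\rho|^{2})|\omega|^{2}$ expression is an honest conformal metric on $\Sigma$ even at zeros of $\omega$: at such a point $\rho$ has a pole whose order is exactly compensated by the zero of $|\omega|^{2}$, because $|\omega|^{2}+|\theta|^{2}$ is smooth and positive by the Legendrian immersion property. Once that is noted, Theorem \ref{main-1} and Corollary \ref{omit-1} apply verbatim with $m=1$, which both produces the curvature bound and enforces the sharp bound of $3=1+2$ exceptional values for the ratio of canonical forms under weak completeness.
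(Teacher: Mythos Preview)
Your proposal is correct and follows essentially the same approach as the paper: the paper simply states that Theorem \ref{flat-Fujimoto} is obtained by applying Theorem \ref{main-1} to the metric $ds^{2}_{\Lc}=(1+|\rho|^{2})|\omega|^{2}$, and you have supplied exactly this argument (with $m=1$, $g=\rho$) together with the auxiliary verification that the data $(\omega,\rho)$ fit the hypotheses of Theorem \ref{main-1} and Corollary \ref{omit-1}. Your additional remark about positivity of the metric at zeros of $\omega$ is a correct and useful clarification that the paper leaves implicit.
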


If $\Sigma$ is not simply connected, then we consider that $\rho$ is a meromorphic function on its universal covering surface $\widetilde{\Sigma}$. 
As a corollary of Theorem \ref{flat-Fujimoto}, we give a simple proof of the classification (\cite{Sa1973}, \cite{VV1971}) of complete nonsingular flat surfaces in $\H^{3}$. 
For the proof, see \cite[Corollary 3.5]{Ka2012}. 

\begin{corollary}\label{flat-Bernstein}
Any complete nonsingular flat surface in ${\H}^{3}$ must be a horosphere or a hyperbolic cylinder. 
\end{corollary}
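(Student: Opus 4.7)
The strategy is to use Theorem~\ref{flat-Fujimoto} to force the ratio $\rho$ to be identically constant, and then to invoke the explicit classification of flat surfaces in $\H^{3}$ with constant ratio of canonical forms.

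First, I would lift to the universal cover $\pi\colon\widetilde{\Sigma}\to\Sigma$ and consider the pulled-back flat front, whose ratio of canonical forms is $\rho\circ\pi$. Since $f$ is nonsingular, $|\rho(p)|\neq 1$ for every $p$, and by continuity and connectedness $\rho$ takes values in exactly one of the two open sets $\{|w|<1\}$ or $\{|w|>1\}$. In particular $\rho$ omits every point of the unit circle $\{|w|=1\}$---uncountably many distinct values, and in any case far more than the three required by Theorem~\ref{flat-Fujimoto}.

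Next, I would show that completeness of the induced metric $ds^{2}$ forces weak completeness, i.e., completeness of $ds^{2}_{\Lc}=|\omega|^{2}+|\theta|^{2}$ on $\widetilde{\Sigma}$. Writing $\omega=u\,dz$ and $\theta=v\,dz$ in a local coordinate, a direct pointwise calculation from the formula for the first fundamental form of a flat front in terms of $\omega$ and $\theta$ gives the tensor inequality $ds^{2}\leq 2\,ds^{2}_{\Lc}$. Consequently every divergent curve has $ds^{2}_{\Lc}$-length at least $1/\sqrt{2}$ times its $ds^{2}$-length, so completeness of $ds^{2}$ passes to $ds^{2}_{\Lc}$. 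All hypotheses of Theorem~\ref{flat-Fujimoto} now hold for $\rho\circ\pi$ on the simply connected $\widetilde{\Sigma}$, and the contrapositive of that theorem forces $\rho\circ\pi$---and hence $\rho$---to be a constant function.

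It remains to classify complete nonsingular flat surfaces in $\H^{3}$ whose ratio is identically a constant $c\in\C\cup\{\infty\}$. Writing $\theta=c\,\omega$, the Legendrian system ${\Lc}^{-1}d\Lc$ has only a single off-diagonal scalar degree of freedom and integrates to a closed-form matrix expression; projecting via $f=\Lc\Lc^{\ast}$ yields an explicit surface in $\H^{3}$. One checks that $c=0$ (equivalently $c=\infty$) produces horospheres, while any constant $c$ with $|c|\notin\{0,1,\infty\}$ produces hyperbolic cylinders, i.e., equidistant tubes around geodesics. Finally one must verify that the completeness hypothesis survives the descent from $\widetilde{\Sigma}$ back to $\Sigma$, which is immediate because both model surfaces are either simply connected or admit only the obvious cylindrical quotients. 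The main obstacle is precisely this last classification step: performing the explicit integration of the Legendrian system for each constant value of $\rho$ and identifying the resulting surface, up to isometry of $\H^{3}$, with one of the two named models. All preceding steps are formal consequences of Theorem~\ref{flat-Fujimoto}, the metric comparison $ds^{2}\leq 2\,ds^{2}_{\Lc}$, and the geometric observation that nonsingularity forces omission of the entire unit circle; the detailed execution of the classification is carried out in \cite[Corollary 3.5]{Ka2012}.
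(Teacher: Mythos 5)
Your proposal is correct and follows essentially the same route as the paper, which itself gives no independent argument but defers precisely this proof to \cite[Corollary 3.5]{Ka2012}: nonsingularity forces $\rho$ to omit the whole unit circle, the pointwise comparison $ds^{2}=|\omega+\bar{\theta}|^{2}\leq 2(|\omega|^{2}+|\theta|^{2})=2\,ds^{2}_{\Lc}$ upgrades completeness to weak completeness on the universal cover, Theorem \ref{flat-Fujimoto} then makes $\rho$ constant, and the constant-$\rho$ fronts are identified as horospheres ($\rho\equiv 0$ or $\infty$) and hyperbolic cylinders ($|\rho|$ constant, $\neq 0,1,\infty$). You have correctly reconstructed all the steps, including the lift to $\widetilde{\Sigma}$ required because Theorem \ref{flat-Fujimoto} is stated for simply connected surfaces.
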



\end{document}